\newcommand{\Id}{\operatorname{Id}}
\newcommand{\im}{\operatorname{im}}
\newcommand{\tor}{\operatorname{tor}}
\newcommand{\Qp}{\mathbf{Q}_p}
\newcommand{\Cp}{\mathbf{C}_p}
\newcommand{\Zp}{\mathbf{Z}_p}
\newcommand{\NN}{\mathbf{N}}
\newcommand{\QQ}{\mathbf{Q}}
\newcommand{\eps}{\varepsilon}
\newcommand{\OO}{\mathcal{O}}
\newcommand{\bigO}{\mathrm{O}}
\newcommand{\calE}{\mathcal{E}}
\newcommand{\calA}{\mathcal{A}}
\newcommand{\calF}{\mathcal{F}}
\newcommand{\calR}{\mathcal{R}}
\newcommand{\MM}{\mathfrak{m}}
\newcommand{\frB}{\mathfrak{B}}
\newcommand{\frX}{\mathfrak{X}}
\newcommand{\wideg}{\operatorname{wideg}}
\newcommand{\res}{\operatorname{res}}
\newcommand{\val}{\operatorname{val}}
\newcommand{\vp}{\val_p}
\newcommand{\Tr}{\operatorname{Tr}}
\newcommand{\Hom}{\operatorname{Hom}}
\newcommand{\card}{\operatorname{card}}
\newcommand{\dcroc}[1]{[\![ #1 ]\!]}
\newcommand{\LT}{\mathrm{LT}}
\newcommand{\Gm}{\mathbf{G}_\mathrm{m}}
\renewcommand{\geq}{\geqslant}
\renewcommand{\leq}{\leqslant} 
\renewcommand{\phi}{\varphi} 
\newcommand{\hotimes}{\widehat{\otimes}}
\newcommand{\fan}{\text{$F\!\mbox{-}\mathrm{la}$}}
\newcommand{\la}{\mathrm{la}}
\author{Laurent Berger}
\address{UMPA de l'ENS de Lyon \\
UMR 5669 du CNRS}
\email{laurent.berger@ens-lyon.fr}
\urladdr{perso.ens-lyon.fr/laurent.berger/}
\title{Lubin-Tate generalizations of the $p$-adic Fourier transform}
\date{\today}
\begin{document}

\begin{abstract}
Fresnel and de Mathan proved that the $p$-adic Fourier transform is surjective. We reinterpret their result in terms of analytic boundaries, and extend it beyond the cyclotomic case. We also give some applications of their result to Schneider and Teitelbaum's $p$-adic Fourier theory, in particular to generalized Mahler expansions and to the geometry of the character variety. 
\end{abstract}

\subjclass{11S; 12J; 13J; 14G; 30G; 46S}

\keywords{$p$-adic Fourier transform; $p$-adic Fourier theory; Mahler expansion; Lubin-Tate group; character variety; analytic boundary}

\thanks{This research is partially supported by the ANR project ANR-19-CE40-0015 COLOSS}

\maketitle

\setcounter{tocdepth}{2}

\tableofcontents

\setlength{\baselineskip}{18pt}

\section*{Introduction}

\subsection*{The $p$-adic Fourier transform}
Let $\Cp$ be the completion of an algebraic closure of $\Qp$, and let $\Gamma = \{ \gamma \in \Cp$ such that $\gamma^{p^n}=1$ for some $n \geq 0\}$ be the set of roots of unity of $p$-power order. Let $c^0(\Gamma,\Cp)$ be the set of sequences $\{z_\gamma\}_{\gamma \in \Gamma}$ with $z_\gamma \in \Cp$ and such that $z_\gamma \to 0$ (namely: for every $\eps>0$, the set of $\gamma$ such that $|z_\gamma| \geq \eps$ is finite) and let $C^0(\Zp,\Cp)$ be the space of continuous functions $\Zp \to \Cp$. For every $\gamma \in \Gamma$, the function $a \mapsto \gamma^a$ belongs to $C^0(\Zp,\Cp)$. 

\begin{enonce*}{Definition}
The \emph{Fourier transform} of $z \in c^0(\Gamma,\Cp)$ is the function $\calF(z) : \Zp \to \Cp$ given by $a \mapsto \sum_{\gamma \in \Gamma} z_\gamma \cdot \gamma^a$. 
\end{enonce*}

Fresnel and de Mathan proved (see \cite{FdM74,FdM75,FdM78}) the following result.

\begin{enonce*}{Theorem}
The Fourier transform $\calF : c^0(\Gamma,\Cp) \to C^0(\Zp,\Cp)$ is surjective, and moreover $\calF : c^0(\Gamma,\Cp) / \ker \calF \to C^0(\Zp,\Cp)$ is an isometry. 
\end{enonce*}

Because of the appearance of roots of unity, the $p$-adic Fourier transform can be seen as a cyclotomic construction. In this paper, we generalize the definition of the Fourier transform as well as Fresnel and de Mathan's theorem beyond the cyclotomic case. We then give a mostly independent application of their theorem to Schneider and Teitelbaum's $p$-adic Fourier theory \cite{ST01}.

\subsection*{Analytic boundaries}
For the first generalization, consider the dual of the $p$-adic Fourier transform. The dual of $c^0(\Gamma,\Cp)$ is $\ell^\infty(\Gamma,\Cp)$, the set of bounded sequences. The dual of $C^0(\Zp,\Cp)$ is isomorphic to $\calE_{\Cp}^+ = \Cp \otimes_{\OO_{\Cp}} \OO_{\Cp} \dcroc{X}$ (via the Amice transform that sends a measure $\mu$ to $\calA_\mu(X) = \sum_{n \geq 0} \mu( a \mapsto \binom{a}{n}) \cdot X^n$).

The dual of the Fourier transform is hence a map $\calF' : \calE_{\Cp}^+ \to \ell^\infty(\Gamma,\Cp)$. It is easy to see that this map is given by $f(X) \mapsto \{ f(\gamma-1) \}_{\gamma \in \Gamma}$. Fresnel and de Mathan's theorem is then equivalent to the claim that $\calF'$ is an isometry on its image, namely that $\|f\|_D = \sup_{\gamma \in \Gamma} |f(\gamma-1)|$ where $D=\MM_{\Cp}$ is the $p$-adic open unit disk. 

\begin{enonce*}{Definition}
A subset $A =\{a_n\}_{n \geq 1} \subset D$ is an \emph{analytic boundary} if $|a_n| \to 1$ as $n \to +\infty$ and if for every $f \in \calE^+_{\Cp}$ we have $\| f \|_D = \|f\|_A := \sup_{n \geq 1} |f(a_n)|$.
\end{enonce*}

Fresnel and de Mathan's theorem is then equivalent to the claim that $\{ \gamma-1, \gamma \in \Gamma\}$ is an analytic boundary. We prove that the same holds if $A$ is the set of torsion points of a Lubin-Tate formal group attached to a finite extension of $\Qp$, and even more generally if $A$ is the set of iterated roots of a certain class of power series, that we call Lubin-Tate-like (LT-like) power series. Let $q$ be a power of $p$.

\begin{enonce*}{Definition}
An \emph{LT-like power series} (of Weierstrass deg $q$) is a power series $P(X) = \sum_{n \geq 1} p_n X^n \in \OO_{\Cp}\dcroc{X}$ with $0 < \vp(p_1) \leq 1$, $p_q \in \OO_{\Cp}^\times$ and $P(X) \equiv p_q X^q \bmod{p_1}$.
\end{enonce*}

If $P(X)$ is as above, let $\Lambda(P)  = \{ z \in D$ such that $P^{\circ n}(z)=0$ for some $n \geq 0\}$. The following result is theorem \ref{lambdas}.

\begin{enonce*}{Theorem A}
If $P$ is LT-like, then $\Lambda(P)$ is an analytic boundary.
\end{enonce*}

If $P(X) = (1+X)^p-1$, then $\Lambda(P) = \{ \gamma-1, \gamma \in \Gamma\}$, and theorem A implies the result of Fresnel and de Mathan. The proof of theorem A is very similar to Fresnel and de Mathan's proof of their result.

\subsection*{$p$-adic Fourier theory}
For the second generalization, let $F$ be a finite extension of $\Qp$ of degree $d$, with ring of integers $\OO_F$. Let $X_{\tor}$ denote the set of finite order characters $(\OO_F,+) \to (\Cp^\times,\times)$. Given $z \in c^0(X_{\tor},\Cp)$, its Fourier transform is the function $\calF(z) : \OO_F \to \Cp$ defined by $a \mapsto \sum_{g \in X_{\tor}} z_g \cdot g(a)$. It is easy to see (theorem \ref{fdmof}) that Fresnel and de Mathan's theorem implies that $\calF : c^0(X_{\tor},\Cp) \to C^0(\OO_F,\Cp)$ is surjective. We give an application of this observation to $p$-adic Fourier theory.

Let $e$ be the ramification index of $F$, let $\pi$ be a uniformizer of $\OO_F$, and let $q = \card \OO_F/\pi$. Let $\LT$ be the Lubin-Tate formal $\OO_F$-module attached to $\pi$,  let $X$ be a coordinate on $\LT$, and let $\log_{\LT}(X)$ be the logarithm of $\LT$. For $n \geq 0$, let $P_n(Y) \in F[Y]$ be the polynomial defined by $\exp(Y \cdot \log_{\LT}(X)) = \sum_{n \geq 0} P_n(Y) X^n$. 

When $F=\Qp$ and $\LT=\Gm$, we have $P_n(Y) = \binom{Y}{n}$. The family $\{ \binom{Y}{n} \}_{n \geq 0}$ forms a Mahler basis of $\Zp$. In addition, by a theorem of Amice \cite{A64}, every locally analytic function $\Zp \to \Cp$ can be written as $x \mapsto \sum_{n \geq 0} c_n \binom{x}{n}$ where $\{c_n\}_{n \geq 0}$ is a sequence of $\Cp$ such that there exists $r>1$ satisfying $|c_n| \cdot r^n \to 0$. 

In their work \cite{ST01} on $p$-adic Fourier theory, Schneider and Teitelbaum generalized this last result to $F \neq \Qp$. They proved the existence of an element $\Omega \in \OO_{\Cp}$, with $\vp(\Omega) = 1/(p-1)-1/e(q-1)$, such that $P_n(a \Omega) \in \OO_{\Cp}$ for all $a \in \OO_F$. The power series $G(X) = \exp(\Omega \cdot \log_{\LT}(X))-1$ therefore belongs to $\Hom_{\OO_{\Cp}}(\LT,\Gm)$. One of the main results of $p$-adic Fourier theory is the following (prop 4.5 and theo 4.7 of \cite{ST01}).

\begin{enonce*}{Theorem}
If $\{c_m\}_{m \geq 0}$ is a sequence of $\Cp$ such that there exists $r>1$ satisfying $|c_m| \cdot r^m \to 0$, then $a \mapsto \sum_{m \geq 0} c_m P_m(a \Omega)$ is a locally $F$-analytic function $\OO_F \to \Cp$.

Conversely, every locally $F$-analytic function $\OO_F \to \Cp$ has a unique such expansion.
\end{enonce*}

If we only ask that $c_m \to 0$, then $a \mapsto \sum_{m \geq 0} c_m P_m(a \Omega)$ is a continuous function $\OO_F \to \Cp$. We therefore get a map $c^0(\NN, \Cp) \to C^0(\OO_F,\Cp)$, whose image contains all locally $F$-analytic functions. If $F = \Qp$, this map is an isomorphism. In general, Fresnel and de Mathan's theorem and some computations in $p$-adic Fourier theory imply that the map is surjective, and noninjective if $F \neq \Qp$. Using the fact that every element of $C^0(\Zp,\Cp)$ can be written in one and only one way as $x \mapsto \sum_{n \geq 0} \lambda_n \binom{x}{n}$ where $\lambda \in c^0(\NN, \Cp)$, we reformulate this result using the following definition.

\begin{enonce*}{Definition}
The \emph{Peano map} $T : C^0(\Zp,\Cp) \to C^0(\OO_F,\Cp)$ is the map given by \[ T : \left[ x \mapsto \sum_{n \geq 0} \lambda_n \binom{x}{n} \right] \mapsto \left[ a \mapsto \sum_{n \geq 0} \lambda_n P_n( a \Omega) \right]. \]
\end{enonce*}

\begin{enonce*}{Theorem B}
The Peano map $T : C^0(\Zp,\Cp) \to C^0(\OO_F,\Cp)$ is surjective, and noninjective if $F \neq \Qp$.
\end{enonce*}

This is coro \ref{thb}. By Schneider and Teitelbaum's theorem recalled above, $T : C^{\la}(\Zp,\Cp) \to C^{\fan}(\OO_F,\Cp)$ is an isomorphism. So one can think of $T$ as some Peano-like map: a surjective noninjective limit of isomorphisms, from a $1$-dimensional object to a $d$-dimensional object.

\subsection*{The character variety}

The rigid analytic $p$-adic open unit disk $\frB$ is a parameter space for characters $(\Zp,+) \to (\Cp^\times,\times)$: if $K$ is a closed subfield of $\Cp$, a point $z \in \frB(K)$ corresponds to the character $\eta_z : a \mapsto (1+z)^a$ and all $K$-valued continuous characters are of this form. In particular, all continuous characters are locally analytic. 

If $F$ is a finite extension of $\Qp$ of degree $d$, then $\OO_F \simeq \Zp^d$ and $\frB^d$ is then a parameter space for characters $(\OO_F,+) \to (\Cp^\times,\times)$. Schneider and Teitelbaum have constructed in \cite{ST01} a $1$-dimensional rigid analytic group variety $\frX \subset \frB^d$ over $F$, called the character variety, whose closed points in an extension $K/F$ parameterize locally $F$-analytic characters $\OO_F \to K^\times$. They show that over $\Cp$, the variety $\frX$ becomes isomorphic to $\frB$.

Let $\OO^b_{\Cp}(\frB^d)$ denote the ring of bounded functions on $\frB^d$ defined over $\Cp$, and likewise for $\OO^b_{\Cp}(\frX)$. We have $\OO^b_{\Cp}(\frX) \simeq \calE^+_{\Cp}$ and $\OO^b_{\Cp}(\frB^d)$ is likewise isomorphic to the ring of bounded functions in $d$ variables. The restriction-to-$\frX$ map $\res_{\frX} : \OO^b_{\Cp}(\frB^d) \to \OO^b_{\Cp}(\frX)$ is injective by \cite{BSX}. By $p$-adic Fourier theory, $\OO^b_{\Cp}(\frX)$ is the dual of $C^0(\Zp,\Cp)$, $\OO^b_{\Cp}(\frB^d)$ is the dual of $C^0(\OO_F,\Cp)$, and $\res_{\frX}$ is the dual of the Peano map $T$. 

Theorem B now implies the following result (theorem \ref{resisom}).

\begin{enonce*}{Theorem C}
The map $\res_{\frX} : \OO^b_{\Cp}(\frB^d) \to \OO^b_{\Cp}(\frX)$ is an isometry on its image.
\end{enonce*}

In the isomorphism between $\frX$ and $\frB$, we have $\OO^b_{\Cp}(\frX) \simeq \calE^+_{\Cp}$, and the set $X_{\tor}$ of torsion characters $(\OO_F,+) \to (\Cp^\times,\times)$ corresponds to $\LT[\pi^\infty]$. Theorem A applied to $P(X)=[\pi](X)$ implies the following result (theorem \ref{suptors}).

\begin{enonce*}{Theorem D}
If $f \in \OO^b_{\Cp}(\frX)$, then $\| f \|_{\frX} = \sup_{\kappa \in X_{\tor}} |f(\kappa)|$. 
\end{enonce*}

Theorem A is proved in \S \ref{bdsec} and theorems B, C and D are proved in \S \ref{fousec}.

\section{Construction of analytic boundaries}
\label{bdsec}

The goal of this section is to state and prove theorem A.

\subsection{$p$-adic holomorphic functions and analytic boundaries}
\label{holrem}

We recall some standard facts about holomorphic functions on the $p$-adic open unit disk (for which see \cite{L62} or \cite{R00}), and define analytic boundaries. Let $D=\MM_{\Cp}$ be the $p$-adic open unit disk. Let $\calE_{\Cp}^+ = \Cp \otimes_{\OO_{\Cp}} \OO_{\Cp} \dcroc{X}$ be the ring of bounded holomorphic functions on $D$, and let $\calR_{\Cp}^+$ be the ring of holomorphic functions on $D$. If $f \in \calR_{\Cp}^+$ and $\mu > 0$, we let $V(f,\mu) = \inf_{n \geq 0} \vp(f_n) + \mu n$. If $\mu \in \QQ_{> 0}$, then $V(f,\mu) = \inf_{z \in D, \vp(z)=\mu} \vp(f(z))$. The function $\mu \mapsto V(f,\mu)$ is continuous, increasing and piecewise affine. We have $V(fg,\mu) = V(f,\mu)+V(g,\mu)$. If $f \in \calE_{\Cp}^+$, then $V(f,0)$ is also defined, and $V(f,0) = -\log_p \| f \|_D$. We say that $\mu>0$ is a critical valuation if there exists $i \neq j$ such that $V(f,\mu) = \vp(f_i) + \mu i = \vp(f_j) + \mu j$. Recall that $f$ has a zero of valuation $\mu$ if and only if $\mu$ is a critical valuation, and that the critical valuations of $f$, as well as the number of zeroes of $f$ having that valuation, can be read on the Newton polygon of $f$. 

Divisors are defined in \S 4 of \cite{L62}. In this paper, we only consider divisors that are an infinite formal product $\prod_{k \geq 1} D_k(X)$ where for each $k$, $D_k(X)$ is a polynomial such that $D_k(0)=1$ and all the roots of $D_k$ are of valuation $\mu_k$, where $\{ \mu_k \}_{k \geq 1}$ is a strictly decreasing sequence converging to $0$. We then have $V(D_k,\mu) = 0$ if $\mu \geq \mu_k$ and $V(D_k,\mu) = \deg D_k \cdot (\mu-\mu_k)$ if $\mu \leq \mu_k$.

\begin{prop}
\label{fdmzer}
Let $\prod_{k \geq 1} D_k(X)$ be a divisor and take $\eta > 0$. 

There exists $f(X) \in \calR^+$ such that $f(0)=1$, $f$ is divisible by $D_k$ for all $k \geq 1$, and for all $\mu>0$, we have $\sum_{k \geq 1} V(D_k,\mu) \geq V(f,\mu) \geq \sum_{k \geq 1} V(D_k,\mu) - \eta$.
\end{prop}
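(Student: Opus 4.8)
The plan is to build $f$ as a suitable infinite product, where the factors are the $D_k$'s multiplied by small correcting units that make the product converge in $\calR^+$. First I would set $E_k(X) = D_k(X)$ and consider the formal product $\prod_{k \geq 1} D_k(X)$: since $D_k(0) = 1$ for all $k$, this product converges coefficient-by-coefficient as a formal power series provided the $D_k$ eventually contribute only high-degree terms, which is \emph{not} automatic (each $D_k$ contributes a linear term in general). So the product $\prod_k D_k$ need not converge in $\calR^+$, and the real content is to fix this. The standard device (this is exactly the mechanism behind the Weierstrass-type product theorems in \cite{L62}, \S 4) is to replace each $D_k(X)$ by $D_k(X) \cdot U_k(X)$, where $U_k(X) \in 1 + X\cdot\OO_{\Cp}\dcroc{X}$ is chosen so that $D_k(X)U_k(X) \equiv 1 \bmod{X^{N_k}}$ for a rapidly growing sequence $N_k$; equivalently $U_k = D_k^{-1} \bmod X^{N_k}$, truncated. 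Then $D_kU_k = 1 + X^{N_k}R_k(X)$ for some $R_k \in \OO_{\Cp}\dcroc{X}$ (one must check the coefficients of $U_k$, hence of $R_k$, are bounded — this uses $D_k(0)=1$ and that $D_k$ is a polynomial, so $D_k^{-1}$ has bounded coefficients up to a controlled denominator depending on the valuations of the roots; since $\mu_k \to 0$ this denominator stays controlled).

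Next I would define $f(X) = \prod_{k \geq 1} \bigl(D_k(X) U_k(X)\bigr)$. With $N_k$ growing fast enough this converges in $\OO_{\Cp}\dcroc{X}$, so $f \in \calE^+ \subset \calR^+$, and $f(0) = 1$. By construction $f$ is divisible by each $D_k$ inside $\calR^+$: indeed $f = D_k \cdot U_k \cdot \prod_{j \neq k}(D_jU_j)$ and the complementary product lies in $\OO_{\Cp}\dcroc{X} \subset \calR^+$. It remains to get the valuation estimate. Since $V(\,\cdot\,,\mu)$ is additive on products, $V(f,\mu) = \sum_k V(D_kU_k,\mu) = \sum_k \bigl(V(D_k,\mu) + V(U_k,\mu)\bigr)$ once one knows $V(D_kU_k,\mu) = V(D_k,\mu)+V(U_k,\mu)$ (additivity on $\calR^+$), so the upper bound $V(f,\mu) \le \sum_k V(D_k,\mu)$ will follow from $V(U_k,\mu) \le 0$, while the lower bound needs $\sum_k V(U_k,\mu) \ge -\eta$. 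Now $U_k \in \OO_{\Cp}\dcroc{X}$, but with bounded-below coefficient valuations, so $V(U_k,\mu) \le \vp(U_{k,0}) = 0$; and $V(U_k,\mu) \ge V(U_k,0) = -\log_p\|U_k\|_D$. The point is that by choosing $N_k$ large (equivalently, truncating the inverse further out), one can make $\|U_k\|_D$ as close to $1$ as desired — more precisely one should instead arrange, via a telescoping choice, that $\sum_k \log_p\|U_k\|_D \le \eta/\log p$ — using that $D_k^{-1}$, while unbounded on all of $D$, has partial sums whose sup-norm on $D$ tends to $1$ as the tail is pushed out, because the bad contribution comes only from near the circle $\vp = \mu_k$ which shrinks. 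Since $\mu_k \downarrow 0$ this is where the summability must be extracted carefully.

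The main obstacle I anticipate is precisely this last quantitative point: controlling $\sum_k \log_p \|U_k\|_D$ uniformly, i.e. showing the correcting units can be chosen with total norm-defect below $\eta$ while still killing enough initial coefficients for convergence. This requires a genuine estimate on the sup-norm of the truncated inverse of $D_k$ on $D$ in terms of $N_k$ and $\mu_k$; because $\mu_k \to 0$, the inverses $D_k^{-1}$ become "more singular near the boundary" and one must check the truncation length $N_k$ needed to reach a given defect does not grow so fast as to break convergence, or conversely that the defects are summable for the $N_k$ one is forced to take. I would handle this by an explicit Newton-polygon computation of $\|U_k\|_D$: since $D_k$ has all roots of valuation $\mu_k$ and $D_k(0)=1$, one has $\|D_k^{-1} - (\text{degree} < N)\text{ part}\|_D \le p^{-(\mu_k N - \deg D_k \cdot \mu_k)}$ or similar, which is small once $N \gg \deg D_k$, and then one picks $N_k$ to beat both $2^{-k}\eta$ and the convergence requirement simultaneously. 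A cleaner alternative, if available, is to cite \cite{L62} \S 4 directly for the existence of the product and only redo the $\eta$-estimate; I expect the author does essentially this.
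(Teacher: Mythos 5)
Your proposal runs into a genuine obstruction, and a secondary estimate is stated backwards. The paper itself does not give a direct proof: it cites theorem~1 of \cite{FdM74} and theorem~25.5 of \cite{E95}, so your guess at the end (``cite the literature and redo the $\eta$-estimate'') is close to what the author actually does, though the reference is Fresnel--de~Mathan/Escassut rather than Lazard.

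The main gap is the tension you yourself flag but do not resolve, and it is in fact fatal for the one-$D_k$-at-a-time scheme. With $U_k$ the degree-$(N_k-1)$ truncation of $D_k^{-1}$, the degree-$i$ coefficient of $U_k$ has valuation bounded below only by $-i\mu_k$ (not in $\OO_{\Cp}$, contrary to what you write: $D_k$ has a leading coefficient of valuation $-\deg D_k\cdot\mu_k$, so $D_k^{-1}$ has genuinely negative-valuation coefficients). Consequently $\log_p\|U_k\|_D$ is of order $(N_k-1)\mu_k$ and \emph{grows} with $N_k$; the claim that taking $N_k$ large makes $\|U_k\|_D$ tend to $1$ has the direction reversed. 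The defect bound therefore requires $\sum_k (N_k-1)\mu_k\leq\eta$, while convergence of $\prod_k D_kU_k$ in $\calR^+$ requires $N_k\to\infty$ (on each sub-disk $\vp\geq\mu$ one has $V(D_kU_k-1,\mu)\approx N_k(\mu-\mu_k)$, which must tend to $+\infty$). These two requirements are incompatible whenever $\sum_k\mu_k=+\infty$ --- e.g.\ $\mu_k=1/k$ with $\deg D_k=1$ --- because $N_k\to\infty$ forces $\sum_k(N_k-1)\mu_k\geq\sum_{k\ \mathrm{large}}\mu_k=+\infty$. Divisors with $\sum_k\mu_k=+\infty$ are allowed by the statement, so the proposition is not proved by this method.

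The fix is to group the $D_k$ before inverting: partition $(0,\mu_1]$ into intervals $I_j=(\mu_1 2^{-j-1},\mu_1 2^{-j}]$, set $E_j=\prod_{\mu_k\in I_j}D_k$, and let $U_j$ be the truncated inverse of $E_j$ to order $N_j$. Then the degree-$i$ coefficient of $U_j$ has valuation $\geq -i\cdot\mu_1 2^{-j}$, so $\log_p\|U_j\|_D\leq (N_j-1)\mu_1 2^{-j}$, and $\sum_j(N_j-1)2^{-j}$ can be made $\leq\eta/\mu_1$ with $N_j\to\infty$ (e.g.\ $N_j\sim j$), independently of how slowly $\mu_k\to 0$. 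With that change, the rest of your argument --- $f(0)=1$, divisibility by each $D_k$, $V(U_j,\mu)\leq 0$ for the upper bound, $V(U_j,\mu)\geq -(N_j-1)\mu_1 2^{-j}$ for the lower bound, additivity of $V(\cdot,\mu)$ --- goes through. I would also cut the Newton-polygon estimate ``$\|D_k^{-1}-(\text{degree}<N)\text{ part}\|_D\leq\cdots$'': $D_k^{-1}$ is not defined (let alone bounded) on all of $D$, only on $\vp>\mu_k$, so that norm is not meaningful.
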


\begin{proof}
This is theorem 1 of \cite{FdM74}. See theorem 25.5 of \cite{E95} for a full proof, noting that $A_b(d(0,r^-))$ should be $A(d(0,r^-))$ in the statement of ibid.
\end{proof}

We now define analytic boundaries. Since $D$ is a separable topological space, there are plenty of countable sets $A =\{a_n\}_{n \geq 1} \subset D$ such that $\| f \|_D = \|f\|_A := \sup_{n \geq 1} |f(a_n)|$ for all $f \in \calE_{\Cp}^+$. We are interested in those sets $A$ such that $|a_n| \to 1$ as $n \to +\infty$.

\begin{defi}
\label{defprops}
We say that $A=\{a_n\}_{n \geq 1} \subset D$ is an analytic boundary if $|a_n| \to 1$ as $n \to + \infty$ and if for every $f \in \calE^+_{\Cp}$ we have $\| f \|_D = \|f\|_A$.
\end{defi}

\begin{lemm}
\label{remove}
If $A$ is an analytic boundary and $h \neq 0 \in \calE^+_{\Cp}$, then $A' = A \setminus \{ a \in A$ such that $h(a)=0\}$ is also an analytic boundary.
\end{lemm}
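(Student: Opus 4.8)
The plan is to show that $A'$ still detects the sup-norm of every $f \in \calE^+_{\Cp}$, the condition $|a_n| \to 1$ being inherited trivially from $A$ (removing elements from a sequence tending to the boundary in modulus cannot destroy that property, and $A'$ is still infinite since $h$ has only finitely many zeroes of each valuation, hence $A \setminus \{h=0\}$ is cofinite in $A$ when restricted to any annulus $\vp(z) \leq \mu$, so it is still countably infinite with $|a_n| \to 1$). So fix $f \neq 0$ in $\calE^+_{\Cp}$; I want $\|f\|_{A'} = \|f\|_D$. Since $A' \subset A$ we automatically have $\|f\|_{A'} \leq \|f\|_A = \|f\|_D$, so the content is the reverse inequality.

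The key trick is to apply the analytic boundary property of $A$ not to $f$ but to the product $fh$. First I would note $fh \in \calE^+_{\Cp}$ and $\|fh\|_D = \|f\|_D \cdot \|h\|_D$ (the Gauss norm on $\calE^+_{\Cp}$ is multiplicative, which follows from $V(fg,0) = V(f,0)+V(g,0)$, itself a consequence of the corresponding identity for all $\mu \geq 0$ recalled in \S\ref{holrem}). Hence $\|f\|_D \cdot \|h\|_D = \|fh\|_A = \sup_{a \in A} |f(a)||h(a)|$. Now split the supremum over $A$ into the part over $A'$ and the part over the zeroes of $h$ in $A$: on the latter the product $|f(a)||h(a)|$ vanishes, so it contributes nothing, and therefore $\|fh\|_A = \sup_{a \in A'} |f(a)||h(a)| \leq \|f\|_{A'} \cdot \|h\|_D$ (using $|h(a)| \leq \|h\|_D$ for every $a \in D$). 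Comparing, $\|f\|_D \cdot \|h\|_D \leq \|f\|_{A'}\cdot \|h\|_D$, and dividing by $\|h\|_D \neq 0$ gives $\|f\|_D \leq \|f\|_{A'}$, as wanted.

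The only genuinely delicate point is the claim that $A'$ still satisfies $|a_n| \to 1$, i.e.\ that we have not removed "too many" points; this is where one uses that $h \in \calE^+_{\Cp}$ is a nonzero \emph{bounded} holomorphic function, so by the theory recalled in \S\ref{holrem} it has only finitely many zeroes of valuation $\geq \mu$ for each $\mu>0$ (the Newton polygon has finitely many slopes in any range bounded away from $0$). Thus for each $\eps>0$ only finitely many $a \in A$ with $|a| < 1-\eps$ — wait, rather: the set $\{a \in A : h(a) = 0\}$ meets each region $\{\vp(z) \geq \mu\}$ in a finite set, so removing it leaves a set on which still $|a_n|\to 1$. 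I expect the norm computation itself to be entirely routine once the multiplicativity of $\|\cdot\|_D$ is invoked; the main (minor) obstacle is just being careful that $A'$ remains infinite and boundary-accumulating, which I would handle in one sentence using the finiteness of the zero set of $h$ on each sub-annulus.
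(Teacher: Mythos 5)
Your argument is correct and is essentially identical to the paper's: apply the analytic boundary property of $A$ to $fh$, use multiplicativity of the Gauss norm, observe that $\|fh\|_A = \|fh\|_{A'}$ because the removed points are zeroes of $h$, and divide by $\|h\|_D$. The paper dispatches the condition $|a'_n| \to 1$ in one clause (it is automatic for a subsequence of $A$), whereas you spend more words on it; note that your finiteness-of-zeroes-per-annulus argument by itself does not rule out $A'$ being finite or empty — for that one really needs $A$ to be a boundary, or simply observes a posteriori that the norm equality $\|f\|_D = \|f\|_{A'}$ forces $A'$ infinite — but this is a minor point that does not affect the correctness of the main computation.
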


\begin{proof}
Since $A' \subset A$, it is clear that $|a'_n| \to 1$ as $n \to + \infty$. Moreover, $\| fh \|_A = \| fh \|_{A'}$. Hence if $f \in \calE^+_{\Cp}$, then $\| f \|_D \cdot \| h \|_D = \|f h\|_D = \| fh \|_A = \| fh \|_{A'} \leq \| f \|_{A'} \cdot \| h \|_D$.
\end{proof}

In particular, if $A$ is an analytic boundary, then $A_m = \{ a_n \}_{n \geq m}$ is an analytic boundary for all $m \geq 1$. Our definition of analytic boundary is therefore consistent with the definition of \emph{analytic boundary for $\calE^+_{\Cp}$} given in \S 2 of \cite{B10}, except that we require in addition that $|a_n| \to 1$ as $n \to + \infty$. The following result (theorem 8 of \cite{B10}) can be used to construct many examples of analytic boundaries.

\begin{theo}
\label{wellsep}
If $A \subset D \setminus \{0\}$ is such that $\sum_{n \geq 1} \vp(a_n) = +\infty$ and $|a_n| \to 1$ as $n \to +\infty$ and $|a_n-a_m|= \max( |a_m|,|a_n|)$ for all $m \neq n$, then $A$ is an analytic boundary.
\end{theo}

%\begin{proof}
%This is theorem 8 of \cite{B10}. We provide a proof for the convenience of the reader (using norms $\|\cdot\|_r$ instead of valuations $V(\cdot,\mu)$). If $A$ is not an analytic boundary, there exists $f(X) \in \calE^+_{\Cp}$ and $\rho < \|f\|_D$ such that $|f(a)| \leq \rho$ for all $a \in A$. Let $r_\rho < 1$ be such that $\| f \|_r > \rho$ for all $r > r_\rho$. Take $r > r_\rho$ such that there exists $a \in A$ with $|a|=r$. The function $f$ has a zero of norm $r$, since $|f(a)| < \| f \|_r$ so that $r$ is a critical radius for $f$. Let $z_1,\hdots,z_d$ be the zeroes of $f$ of norm $r$ (with multiplicity), so that $f(X) = (X-z_1) \cdots (X-z_d) \cdot g(X)$ with $g(X)$ $r$-extremal. We have $\| f \|_r = r^d \cdot \| g \|_r$ and $|g(a)| = \|g\|_r$ since $g$ is $r$-extremal. Hence $|f(a)| = \prod_{i=1}^d |a-z_i| \cdot \| f \|_r / r^d$ so that $\prod_{i=1}^d |a-z_i| \leq  r^d \cdot \rho / \| f \|_r < r^d$. This implies that there exists $i$ such that $|a-z_i| < r$. Since $|a-a'|=r$ if $a,a' \in A$ are of norm $r$, the number of $a \in A$ such that $|a| = r$ is at most $d$. In other words, the number of zeroes of $f$ of norm $r$ is at least the number of elements of $A$ of norm $r$. Since $\sum_{n \geq 1} \vp(a_n) = +\infty$, this implies that $\sum_{z \in D \setminus \{0\}} m_f(z) \cdot \vp(z) = +\infty$, where $m_f(z)$ is the multiplicity of $z$ as a zero of $f$. By the theory of Newton polygons, this is not possible for a bounded function.
%\end{proof}

We finish with a simple result that allows us to construct more analytic boundaries.

\begin{lemm}
\label{compos}
If $A \subset D$ is an analytic boundary and $h(X) = \sum_{i \geq 1} h_i X^i \in X \cdot \OO_{\Cp} \dcroc{X}$ is such that $\inf_{i \geq 1} |h_i| = 0$ and $|h(a_n)| \to 1$ as $n \to + \infty$, then $h(A)$ is an analytic boundary.
\end{lemm}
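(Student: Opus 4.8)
The plan is to pull everything back along $h$ and reduce to the fact that $h$ is a surjection of $D$ onto itself. First I would record the easy consequences of $h \in X\cdot\OO_{\Cp}\dcroc{X}$: the map $h$ sends $D$ into $D$, and for any $f \in \calE^+_{\Cp}$ the composite $f\circ h = \sum_{n\ge0} f_n h^n$ again lies in $\calE^+_{\Cp}$, with $\|f\circ h\|_D \le \|f\|_D$ (each coefficient of $f\circ h$ is an $\OO_{\Cp}$-combination of finitely many of the $f_n$). Writing $h(A) = \{h(a_n)\}_{n\ge1}$, the hypothesis $|h(a_n)|\to1$ says precisely that the points of $h(A)$ tend to the boundary of $D$, so the remaining point is to show $\|f\|_{h(A)} = \|f\|_D$ for every $f\in\calE^+_{\Cp}$.

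The crucial observation is that $\|h\|_D = 1$: indeed $h \in X\cdot\OO_{\Cp}\dcroc{X}$ gives $\|h\|_D \le 1$, while $|h(a_n)|\to1$ gives $\|h\|_D \ge \sup_{n\ge1}|h(a_n)| = 1$. From this I would deduce that $h\colon D \to D$ is \emph{surjective}. Given $w\in D$, consider $g(X) = h(X) - w$; since $\|h\|_D = 1 > |w|$ there is some $i_0 \ge 1$ with $\vp(h_{i_0}) < \vp(w) = \vp(g(0))$, so the Newton polygon of $g$ has an edge of negative slope, and hence $g$ has a zero of positive valuation — that is, a zero lying in $D$ — by the Newton polygon description of zeros recalled in \S\ref{holrem}. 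Thus $h(D) = D$.

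The lemma then follows from the chain of identities
\[ \|f\|_{h(A)} = \sup_{n\ge1}|f(h(a_n))| = \|f\circ h\|_A = \|f\circ h\|_D = \sup_{z\in D}|f(h(z))| = \sup_{w\in h(D)}|f(w)| = \|f\|_D, \]
in which the third equality uses that $A$ is an analytic boundary together with $f\circ h\in\calE^+_{\Cp}$, and the last two use $h(D) = D$. Together with $|h(a_n)|\to1$ this shows that $h(A)$ is an analytic boundary. The step I expect to require the most care is the surjectivity $h(D) = D$: it rests on the Newton polygon dictionary of \S\ref{holrem} (a negative-slope edge yields an honest zero of that valuation inside $D$) and on the reduction $\|h\|_D = 1$. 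Should one wish to bypass surjectivity, the same conclusion can be reached disk by disk: for rational $\mu>0$ the map $h$ sends $\{|z|\le p^{-\mu}\}$ onto $\{|w|\le p^{-V(h,\mu)}\}$, so $\|f\circ h\|_D \ge p^{-V(f,V(h,\mu))}$, and letting $\mu\to0^+$ — using $V(h,\mu)\to0$, which holds because $\|h\|_D=1$, and the continuity of $V(f,\cdot)$ — gives $\|f\circ h\|_D \ge \|f\|_D$, the reverse inequality being trivial. (Only the hypothesis $|h(a_n)|\to1$ enters this argument; the hypothesis $\inf_{i\ge1}|h_i| = 0$, which holds automatically for the power series occurring in the applications, does not seem to be needed here.)
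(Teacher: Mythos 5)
Your proof is correct and follows essentially the same route as the paper's, which likewise reduces to the surjectivity of $h\colon D\to D$ (obtained by looking at the Newton polygon of $h(X)-y$ for $y\in D$) and then composes with the analytic boundary property of $A$ via $\|f\|_D=\|f\circ h\|_D=\|f\circ h\|_A=\|f\|_{h(A)}$. You spell out more carefully the intermediate step that $|h(a_n)|\to 1$ already forces $\|h\|_D=1$, hence $\sup_{i\geq1}|h_i|=1$, which is precisely what the Newton polygon argument requires, and your observation that the stated hypothesis $\inf_{i\geq1}|h_i|=0$ plays no role in the proof (it is $|h(a_n)|\to1$ that does the work) is a fair one.
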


\begin{proof}
The condition on $h(X)$ implies that $h$ gives rise to a surjective function $D \to D$ (if $y \in D$, consider the Newton polygon of $h(X)-y$).

Hence $\| f \|_D = \| f \circ h \|_D$. Now $\| f \circ h \|_D = \sup_{n \geq 1} |f \circ h(a_n)|$.
\end{proof}

\subsection{LT-like power series}
\label{polysub}

We define Lubin-Tate-like (LT-like) power series. Recall that the Weierstrass degree $\wideg(f)$ of $f(X) = \sum_{n \geq 0} f_n X^n \in \OO_{\Cp} \dcroc{X}$ is the min of the $n$ such that $f_n \in \OO_{\Cp}^\times$ (or $+\infty$ if there is no such $n$). Let $q$ be a power of $p$. 

\begin{defi}
\label{defgood}
An LT-like power series (of $\wideg$ $q$) is a power series $P(X) = \sum_{n \geq 1} p_n X^n \in \OO_{\Cp}\dcroc{X}$ with $0 < \vp(p_1) \leq 1$ and $p_q \in \OO_{\Cp}^\times$ and $P(X) \equiv p_q X^q \bmod{p_1}$.
\end{defi}

Note that if $P$ is a LT-like power series, then $P'(X)$ is a unit of $\calE^+_{\Cp}$. In particular, for every $z \in D$, all the roots of $P(X)-z$ in $D$ are simple. If $P(X)$ is a LT-like power series and $n \geq 0$, let $\Lambda_n = \{ z \in D$ such that $P^{\circ n}(z)=0 \}$, and let $\Lambda(P) = \cup_{n \geq 0} \Lambda_n$. 

The following theorem (theorem A) is proved at the end of \S \ref{auxsub}.

\begin{theo}
\label{lambdas}
If $P$ is LT-like, then $\Lambda(P)$ is an analytic boundary.
\end{theo}

\begin{rema}
\label{ltex}
Let $\pi$ be a uniformizer of a finite extension $F$ of $\Qp$ of degree $d$, and let $\LT$ be the Lubin-Tate formal $\OO_F$-module attached to $\pi$.
\begin{enumerate}
\item The power series $P(X)=[\pi](X)$ is an LT-like power series, and $\Lambda(P)=\LT[\pi^\infty]$ is therefore an analytic boundary by theorem \ref{lambdas}.
\item The $\Zp$-module $\LT[\pi^\infty]$ is isomorphic to $(\Qp/\Zp)^d$. If $M \subset  \LT[\pi^\infty]$ is isomorphic to $(\Qp/\Zp)^{d-1}$, then there is a nonzero bounded function $f(X) \in \calE_{\Cp}^+$ such that $f(z)=0$ for all $z \in M$. In particular, $M$ is not an analytic boundary.
\end{enumerate}
\end{rema}

Let $P$ be a LT-like power series, and write $P(X) = X \cdot Q(X)$. For $n \geq 1$, let $Q_n(X) = Q(P^{\circ (n-1)}(X))$, so that $P^{\circ n}(X) = X \cdot Q_1(X) \cdots Q_n(X)$. Let $q_n = q^{n-1}(q-1) = \wideg Q_n$ and $v_1 = \vp(p_1)$ and $\mu_n = v_1/q_n$. The $q_n$ roots of $Q_n$ are all of valuation $\mu_n$. 

Let $H_0=\{0\}$ and let $H_n$ be the set of roots of $Q_n$, so that $\Lambda_n = H_0 \sqcup H_1 \sqcup \hdots \sqcup H_n$.

\begin{lemm}
\label{srone}
Take $z,z' \in D$.
\begin{enumerate}
\item If $P(z) = P(z')$, then $\vp(z-z') \geq \mu_1$. 
\item If $P(z)=y$ and $P(z')=y'$ with $\vp(y-y') \geq \mu_n$, then $\vp(z-z') \geq \mu_{n+1}$.
\end{enumerate}
\end{lemm}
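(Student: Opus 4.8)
The plan is to analyze the Newton polygon of $P(X) - z$ for a point $z \in D$ and exploit the LT-like hypothesis. Write $P(X) = \sum_{n\geq 1} p_n X^n$ with $v_1 = \vp(p_1) \in (0,1]$, $p_q \in \OO_{\Cp}^\times$, and $P(X) \equiv p_q X^q \pmod{p_1}$; the last congruence means $\vp(p_n) \geq v_1$ for $1 \leq n < q$, and $\vp(p_q) = 0$. For part (1), suppose $P(z) = P(z')$ with $z \neq z'$. Then $P(X) - P(z)$ vanishes at both $z$ and $z'$, so $(X-z)(X-z')$ divides $P(X) - P(z)$ in $\OO_{\Cp}\dcroc{X}$ up to the obvious unit considerations; more directly, I would write $P(z) - P(z') = \sum_{n\geq 1} p_n (z^n - z'^n) = (z-z')\sum_{n\geq 1} p_n (z^{n-1} + z^{n-2}z' + \cdots + z'^{n-1}) = 0$, so the second factor vanishes. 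The coefficient of $X^{q-1}$-type term: since $\vp(z), \vp(z') > 0$ and $\vp(p_n) \geq v_1$ for $n < q$ while $\vp(p_q) = 0$, the dominant (smallest valuation) contribution to $\sum_{n\geq 1} p_n(z^{n-1} + \cdots + z'^{n-1})$ comes from $n = q$, giving valuation roughly $(q-1)\min(\vp(z),\vp(z'))$ plus corrections, and for this to vanish the $n=q$ term must be cancelled by lower terms, forcing $(q-1)\min(\vp(z),\vp(z')) \geq v_1$. Hence $\min(\vp(z),\vp(z')) \geq v_1/(q-1) = \mu_1$, and since $\vp(z-z') \geq \min(\vp(z),\vp(z'))$, we get $\vp(z-z') \geq \mu_1$.

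Actually a cleaner route for part (1): consider the polygon of $R(X) := P(X+z) - P(z) = P'(z) X + \cdots + p_q X^q + \cdots$ (a power series in $X$ vanishing at $0$). Since $P'$ is a unit of $\calE^+_{\Cp}$ (noted in the text), $\vp(P'(z))$ equals $\vp$ of the constant term of $P'$, which under the LT-like hypothesis is $\vp(p_1) = v_1$ if... no, $P'(z) = p_1 + 2p_2 z + \cdots$, and mod $p_1$ this is $q p_q X^{q-1}$-evaluated, so $\vp(P'(z)) \geq \min(v_1, (q-1)\vp(z))$ with the $X^{q-1}$ coefficient $q p_q$ having valuation $\vp(q)$. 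The nonzero root $X = z' - z$ of $R$ must lie on the Newton polygon of $R$; the vertex structure of $R$ between degree $1$ (valuation $\vp(P'(z))$) and degree $q$ (valuation $0$) forces every slope to be $\leq v_1/(q-1)$, hence every nonzero root has valuation $\geq v_1/(q-1) = \mu_1$. I expect the main obstacle to be handling the $X^{q-1}$ coefficient of $P'$ carefully when $q > p$ so that $\vp(q) > 0$ interferes; I would resolve this by noting $\vp(P'(z)) < v_1 + (q-1)\vp(z)$ is not needed — only that the polygon of $R$ from abscissa $1$ to abscissa $q$ has total height drop $\vp(P'(z)) \leq $ (something $\leq v_1$ once $\vp(z) \geq \mu_1$, which is circular) — so instead I would argue directly from $P(z) = P(z')$ that $\vp(z), \vp(z') \geq \mu_1$ first (by the reduction-mod-$p_1$ argument: if say $\vp(z) < \mu_1$ then $\vp(z^q) < v_1$ while all lower terms $p_n z^n$ with $n<q$ have $\vp \geq v_1 > \vp(z^q)$... wait need $n \geq q$ terms too) and then conclude.

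For part (2): set $w = z - z'$ and observe $P(z) - P(z') = y - y'$. Using the expansion $P(z) - P(z') = P(z' + w) - P(z') = P'(z') w + \binom{(\text{stuff})}{} w^2 + \cdots$, or more symmetrically the divided-difference formula $y - y' = (z - z') \cdot \Phi(z,z')$ where $\Phi(z,z') = \sum_{n\geq 1} p_n \sum_{i+j = n-1} z^i z'^j$. By part (1) applied to the hypothesis structure — or directly — once we know from the case analysis that $\vp(z), \vp(z') \geq \mu_1 \geq \mu_n$ is too weak; rather I would induct: knowing $\vp(y - y') \geq \mu_n$ and $\vp(y), \vp(y')$ are controlled (elements of $D$), I claim $\vp(\Phi(z,z')) = (q-1)\mu_{n+1}$ exactly, because the $n = q$ term $p_q \sum_{i+j=q-1} z^i z'^j$ is a unit times a sum of monomials each of valuation $\leq (q-1)\mu_1$, and in fact equals $p_q z'^{q-1}(1 + \text{h.o.t.})$ up to symmetrization so has valuation $(q-1)\cdot(\text{common valuation of } z, z')$; meanwhile lower terms have valuation $\geq v_1$. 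So $\vp(z - z') = \vp(y-y') - \vp(\Phi(z,z')) \geq \mu_n - (q-1)\cdot(\text{val})$... This requires knowing $\vp(z) = \vp(z')$ and equal to $\mu_1$ in the torsion case, which holds on $\Lambda(P)$ but not for arbitrary $z,z' \in D$, so I expect the honest argument uses the Newton polygon of $P(X) - y$ versus $P(X) - y'$: since $\vp(y - y') \geq \mu_n$, the polygons of $P(X) - y$ and $P(X) - y'$ agree up to the part with slopes $> \mu_n$, hence $z$ and $z'$ (roots of these respectively) that are "close" must satisfy $\vp(z - z') \geq$ the next relevant slope, which is $\mu_{n+1}$ by the scaling $\mu_{n+1} = \mu_n / q$ built into the $q_n = q^{n-1}(q-1)$ structure. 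The hard part will be making precise the statement that two roots of the two perturbed series, lying in the same residue disk, are forced apart by exactly the factor $q$ coming from the Weierstrass degree $q$ of $P$ at each level; I would do this by applying part (1) to the power series $P$ composed appropriately, i.e. reducing level-$n$ to level-$1$ via the observation that if $\vp(y - y') \geq \mu_n$ then there exist preimages under $P^{\circ(n-1)}$ bringing the gap down to $\mu_1$, then invoke part (1). I anticipate this inductive bookkeeping — correctly tracking that $\mu_{n+1} = v_1/q_{n+1} = v_1/(q \cdot q_n)$ and that part (1)'s gain of $\mu_1$ composes to a gain of $\mu_{n+1}$ over $\mu_n$ — to be the only real subtlety.
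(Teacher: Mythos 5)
Your instinct to look at the Newton polygon of $R(X) = P(X+z) - P(z')$ (with constant term $y - y'$) is exactly the paper's approach, but you raise a worry about circularity that doesn't actually exist, and you never carry part (2) through to a clean argument. The paper treats (1) and (2) together: it writes
$P(X+z) - P(z') = (y-y') + P'(z)X + P^{[2]}(z)X^2 + \cdots + P^{[q]}(z)X^q + \bigO(X^{q+1})$
using the Hasse derivatives $P^{[i]} = P^{(i)}/i! \in \OO_{\Cp}\dcroc{X}$, and then computes all the relevant coefficient valuations in one line from the congruence $P^{[i]}(z) \equiv \binom{q}{i} p_q z^{q-i} \bmod p_1$. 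The point you flagged as a potential obstacle — whether $\vp(q)$ ``interferes'' with $\vp(P'(z))$ — is a non-issue precisely because $q$ is a power of $p$ so $\vp(q) \geq 1 \geq v_1$, hence $q p_q z^{q-1}$ and every term of $P'(z)$ other than $p_1$ has valuation strictly greater than $v_1$. You actually had the right observation (``$P'$ is a unit of $\calE^+_{\Cp}$ so $\vp(P'(z)) = \vp(p_1) = v_1$'') before second-guessing yourself into the weaker and incorrect bound $\vp(P'(z)) \geq \min(v_1, (q-1)\vp(z))$; the original observation is correct and there is no circularity. With $\vp(P'(z)) = v_1$, $\vp(P^{[i]}(z)) \geq v_1$ for $1 \leq i \leq q-1$, $\vp(P^{[q]}(z)) = 0$, and all higher coefficients in $\OO_{\Cp}$, the Newton polygon of $P(X+z)-P(z')$ over $[0,q]$ immediately gives both parts: when $y=y'$ all nonzero roots in $D$ have valuation $\mu_1 = v_1/(q-1)$, and when $\vp(y-y') \geq \mu_n$ the roots in $D$ have valuation at least $\min(\vp(y-y')/q,\ \mu_1,\ \vp(y-y') - v_1) \geq \mu_n/q = \mu_{n+1}$.

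For part (2) you abandon the direct Newton polygon argument and propose instead an induction via preimages under $P^{\circ(n-1)}$, ``bringing the gap down to $\mu_1$ and invoking part (1).'' This does not work as stated: the statement (2) is about arbitrary $z, z' \in D$ with $P(z) = y$, $P(z') = y'$, and there is no mechanism to produce compatible preimages that decrease the gap — applying $P$ decreases valuations and taking preimages increases them, but neither direction reduces the hypothesis $\vp(y-y') \geq \mu_n$ to a tautological case. Moreover you are conflating (2) with Corollary \ref{separoot}: (2) is the single inductive step (one application of the Newton polygon with constant term of valuation $\geq \mu_n$ multiplies the bound by $1/q$), and it is the \emph{corollary} that iterates this to pass from $P^{\circ k}(z) = P^{\circ k}(z')$ to $\vp(z-z') \geq \mu_k$. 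The divided-difference attempt you abandoned midway does indeed founder for exactly the reason you give (you would need to control $\vp(z)$ and $\vp(z')$ individually, which is unavailable for arbitrary points of $D$), so the Newton polygon of $P(X+z)-P(z')$ is the right tool; you just need to commit to it and actually compute the valuations of all coefficients up to degree $q$, which the Hasse-derivative congruence does cleanly.
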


\begin{proof}
We prove both statements at the same time (for item (1), take $y=y'$). Recall that $P^{[i]}(X) = P^{(i)}(X)/i! \in \OO_{\Cp} \dcroc{X}$ is the $i$-th Hasse derivative. We have 
\[ P(X+z)-P(z') = (y-y') + P'(z) X + P^{[2]}(z) X^2 + \cdots + P^{[q]}(z) X^q + \bigO(X^{q+1}). \] 
The valuation of $P'(z)$ is $v_1$, the valuation of $P^{[q]}(z)$ is $0$, and the valuation of $P^{[i]}(z)$ is $\geq v_1$ for all $1 \leq i \leq q-1$. Indeed, $P^{[i]}(z) \equiv \binom{q}{i} p_q z^{q-i} \bmod{p_1}$ and since $q$ is a power of $p$, $\binom{q}{i}$ is divisible by $p$ for all $1 \leq i \leq q-1$ and hence by $p_1$. 

The lemma now follows from the theory of Newton polygons.
\end{proof}

\begin{coro}
\label{separoot}
If $k \geq 1$ and $P^{\circ k}(z) = P^{\circ k}(z')$, then $\vp(z-z') \geq \mu_k$.
\end{coro}

We now define a map $\psi$. Let $\phi : \calR^+_{\Cp} \to \calR^+_{\Cp}$ be the map defined by $\phi(f) = f \circ P$. Note that $\calR^+_{\Cp}$ is a free $\phi(\calR^+_{\Cp})$-module of rank $q$, generated for example by $1,X,\hdots,X^{q-1}$. Let $\psi : \calR^+_{\Cp} \to \calR^+_{\Cp}$ be the map defined by $\phi \circ \psi (f) = \Tr_{\calR^+_{\Cp} / \phi(\calR^+_{\Cp})} f$. Note that we have $\phi(\calE^+_{\Cp}) \subset \calE^+_{\Cp}$ and $\psi(\calE^+_{\Cp}) \subset \calE^+_{\Cp}$. Beware that in the literature, $\psi$ sometimes denotes the map that we have defined, but divided by $p_1$ or by $q$.

\begin{lemm}
\label{psidiv}
We have $\psi(\OO_{\Cp} \dcroc{X}) \subset p_1 \cdot \OO_{\Cp} \dcroc{X}$.
\end{lemm}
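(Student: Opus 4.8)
The plan is to compute $\psi$ explicitly on monomials $X^m$ and check divisibility by $p_1$ directly, using the description of $\psi$ via the trace from $\calR^+_{\Cp}$ to $\phi(\calR^+_{\Cp})$. Since $\psi$ is $\phi(\calR^+_{\Cp})$-linear and $\calR^+_{\Cp}$ is free over $\phi(\calR^+_{\Cp})$ with basis $1,X,\dots,X^{q-1}$, it suffices to understand $\psi(X^i)$ for $0 \leq i \leq q-1$, and then more generally $\psi(X^m)$ for all $m$ by writing $X^m = \phi(g)\cdot X^i$ for suitable $g \in \calR^+_{\Cp}$ and $0\le i\le q-1$ and using $\psi(\phi(g) h) = g\,\psi(h)$. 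In fact it is cleanest to argue with the trace form: by definition $\phi(\psi(X^m)) = \Tr_{\calR^+_{\Cp}/\phi(\calR^+_{\Cp})}(X^m)$, and since $\phi$ is injective and $\phi(\OO_{\Cp}\dcroc X)\subset\OO_{\Cp}\dcroc X$, it is enough to show that this trace lies in $p_1\cdot\phi(\OO_{\Cp}\dcroc X)$, equivalently in $p_1\cdot\OO_{\Cp}\dcroc X$ after identifying with its preimage.

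The key computation is of the trace. First I would treat the "generic" picture: over the fraction field, $X$ has minimal polynomial $P(T)-P(X)$ over $\phi(\calR^+_{\Cp})$ (a polynomial of degree $q$ in $T$ with coefficients in $\phi(\calR^+_{\Cp})$), and the trace of $X^m$ is the $m$-th power sum of its roots $x_1,\dots,x_q$, which are the $q$ solutions of $P(t)=P(X)$. By Lemma \ref{srone}(1), any two of these roots are congruent mod $p_1$ (they all have pairwise differences of valuation $\geq \mu_1 = \vp(p_1)/q$... actually one wants a sharper statement: since $P(X) \equiv p_q X^q \bmod p_1$, reduction mod $p_1$ collapses $P$ to $p_q X^q$, so modulo $p_1$ the equation $P(t)=P(X)$ becomes $p_q t^q = p_q X^q$, i.e. $(t-X)^q = 0$, hence $x_j \equiv X \bmod \mathfrak{p}$ for the appropriate prime; one needs to be careful about what ring the $x_j$ live in). The power sums $\sum_j x_j^m$ are symmetric functions of the $x_j$, hence lie in $\phi(\calR^+_{\Cp})$, and I want to show they lie in $p_1\cdot\phi(\calR^+_{\Cp})$. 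The cleanest route: the power sums are polynomials in the elementary symmetric functions $e_1,\dots,e_q$ of the $x_j$, which up to sign are the coefficients of $P(t)-P(X)$ divided by the leading coefficient $p_q$ (a unit); modulo $p_1$ these coefficients are those of $p_q(t^q - X^q)$, so $e_i \equiv 0 \bmod p_1$ for $1\le i\le q-1$ and $e_q \equiv (-1)^{q-1}X^q$ (well, $\equiv \mp X^q$). Then by Newton's identities, $\Tr(X^m) = \sum_j x_j^m$ reduces mod $p_1$ to the power sum of the multiset $\{X, X, \dots, X\}$ ($q$ copies) — wait, more precisely to the power sums of the roots of $t^q = X^q$ in characteristic-$p_1$ sense — which is $\sum_{\zeta^q=1}(\zeta X)^m = X^m\sum_{\zeta^q = 1}\zeta^m$, and this vanishes mod $p_1$ unless $q \mid m$, in which case it equals $q X^m \equiv 0 \bmod p$, hence $\bmod\, p_1$ since $\vp(p_1)\le 1$. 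Either way $\Tr(X^m) \in p_1\cdot\OO_{\Cp}\dcroc X$.

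The main obstacle will be making the symmetric-function manipulation rigorous over $\OO_{\Cp}\dcroc X$ rather than over a field, since the roots $x_j$ a priori only live in an integral extension and one must ensure the congruences are with respect to the right ideal and that dividing by $p_q$ and appealing to Newton's identities stays integral. The clean fix is to avoid roots entirely: work in $R = \OO_{\Cp}\dcroc X[T]/(P(T)-P(X))$, a free $\OO_{\Cp}\dcroc X$-module of rank $q$ with basis $1,T,\dots,T^{q-1}$, note $R/p_1 R \cong (\OO_{\Cp}/p_1)\dcroc X[T]/(T^q - X^q) = (\OO_{\Cp}/p_1)\dcroc X[T]/((T-X)^q)$ using $\binom{q}{i}\equiv 0$, and compute the trace of multiplication-by-$T^m$ on this module modulo $p_1$: it becomes the trace of multiplication by $(X + (T-X))^m$ on $(\OO_{\Cp}/p_1)\dcroc X[U]/(U^q)$ where $U = T-X$, which is $q\cdot X^m \equiv 0$ since the trace of multiplication by $U^k$ on $k[U]/(U^q)$ is $0$ for $k\ge 1$ and is $q$ for $k=0$. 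Since $\vp(p_1)\le 1$ gives $p \in p_1\OO_{\Cp}$, we get $\Tr(T^m)\equiv 0 \bmod p_1$ for all $m\ge 0$, and since $\Tr(T^m)$ represents $\phi(\psi(X^m))$ inside $\phi(\OO_{\Cp}\dcroc X)$, this yields $\psi(X^m)\in p_1\OO_{\Cp}\dcroc X$. By $\OO_{\Cp}\dcroc X$-linearity and continuity of $\psi$ this extends to all of $\OO_{\Cp}\dcroc X$, completing the proof.
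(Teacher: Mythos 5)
Your proof is correct, and the core idea matches the paper's: reduce modulo $p_1$, where $P(X)$ collapses to $p_q X^q$, so that the relevant traces are visibly divisible by $p_1$. The paper states this more tersely — it just computes $\Tr_{\OO_{\Cp}\dcroc{X}/\phi(\OO_{\Cp}\dcroc{X})}(X^i)$ for $0 \leq i \leq q-1$ on the $\phi$-basis $1, X, \dots, X^{q-1}$ (zero mod $p_1$ for $1 \leq i \leq q-1$, and $q \equiv 0 \bmod p_1$ for $i = 0$), then extends by $\phi$-semilinearity — whereas your base-change to $R = \OO_{\Cp}\dcroc{X}\dcroc{T}/(P(T)-P(X))$ and the substitution $U = T-X$ treat all monomials $X^m$ at once; both implementations rest on the same computation. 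One small slip: in the last sentence you should appeal to $\OO_{\Cp}$-linearity and continuity (or to $\phi$-semilinearity of $\psi$ together with freeness of the module), not $\OO_{\Cp}\dcroc{X}$-linearity, since $\psi$ is not $\OO_{\Cp}\dcroc{X}$-linear.
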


\begin{proof}
The $\phi(\OO_{\Cp} \dcroc{X})$-module $\OO_{\Cp} \dcroc{X}$ is free of rank $q$, generated by $1,X,\hdots,X^{q-1}$. A simple computation shows that mod $p_1$, the trace of $X^i$ is zero for $1 \leq i \leq q-1$. For $i=0$, it is $q$ which is divisible by $p_1$ since $P$ is LT-like.
\end{proof}

\begin{lemm}
\label{psisum}
If $f \in \calR^+$, then $\psi^n(f)(0) = \sum_{z \in \Lambda_n} f(z)$.
\end{lemm}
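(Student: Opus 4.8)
The plan is to first prove the pointwise description of $\psi$, namely that for every $f\in\calR^+_{\Cp}$ and every $y\in D$ one has $\psi(f)(y)=\sum_{w\in P^{-1}(y)}f(w)$, the sum running over the finitely many $w\in D$ with $P(w)=y$; the lemma then follows by a short induction on $n$.

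To establish the pointwise formula, fix $y\in D$. Since $P'$ is a unit of $\calE^+_{\Cp}$, the power series $P(X)-y$ has only simple roots in $D$, and a Newton polygon computation (using $0<\vp(p_1)\le 1$, $\vp(p_i)\ge\vp(p_1)$ for $1\le i\le q-1$, and $p_q\in\OO_{\Cp}^\times$) shows that it has exactly $q$ of them, say $w_1,\dots,w_q$. Evaluation at these points is a ring homomorphism $\rho_y\colon\calR^+_{\Cp}\to\Cp^q$, $f\mapsto(f(w_1),\dots,f(w_q))$, and it is $\phi(\calR^+_{\Cp})$-linear once $\Cp^q$ is viewed as a $\phi(\calR^+_{\Cp})$-module through $\phi(g)\mapsto g(y)\cdot\Id$, because $\phi(g)(w_j)=g(P(w_j))=g(y)$. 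On the free basis $1,X,\dots,X^{q-1}$ of $\calR^+_{\Cp}$ over $\phi(\calR^+_{\Cp})$, the map $\rho_y$ is given by the Vandermonde matrix in $w_1,\dots,w_q$, which is invertible since the $w_j$ are distinct; hence $\rho_y$ induces an isomorphism of $\Cp$-algebras $\calR^+_{\Cp}\otimes_{\phi(\calR^+_{\Cp}),\,\mathrm{ev}_y}\Cp\cong\Cp^q$. As the trace form commutes with base change, $\Tr_{\calR^+_{\Cp}/\phi(\calR^+_{\Cp})}(f)$ specializes at $y$ to the trace of multiplication by $(f(w_1),\dots,f(w_q))$ on $\Cp^q$, which is $\sum_{j=1}^q f(w_j)$. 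Since $\Tr_{\calR^+_{\Cp}/\phi(\calR^+_{\Cp})}(f)=\phi(\psi(f))=\psi(f)\circ P$, specializing the left-hand side at $y$ returns $\psi(f)(y)$, and the formula follows.

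With the formula in hand I would finish by induction on $n$. For $n=0$ we have $\psi^0(f)(0)=f(0)$ and $\Lambda_0=\{0\}$, so the claim holds. For the inductive step, $\psi^{n+1}(f)(0)=\psi^n(\psi(f))(0)=\sum_{z\in\Lambda_n}\psi(f)(z)=\sum_{z\in\Lambda_n}\sum_{w\in P^{-1}(z)}f(w)$; and since $P^{\circ(n+1)}(w)=0$ if and only if $P^{\circ n}(P(w))=0$ if and only if $P(w)\in\Lambda_n$, the fibers $P^{-1}(z)$ for $z\in\Lambda_n$ are pairwise disjoint with union $\Lambda_{n+1}$, so the double sum equals $\sum_{w\in\Lambda_{n+1}}f(w)$.

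The only step that needs genuine care is the identification of the abstract trace $\Tr_{\calR^+_{\Cp}/\phi(\calR^+_{\Cp})}$ with the concrete sum over the fiber of $P$, i.e.\ checking that $\rho_y$ is a base-change isomorphism and that the trace is compatible with this specialization; the Newton polygon count, the $\phi$-linearity, and the combinatorics of the $\Lambda_n$ are all routine. An alternative that avoids the induction is to observe, by transitivity of the trace, that $\phi^n\circ\psi^n=\Tr_{\calR^+_{\Cp}/\phi^n(\calR^+_{\Cp})}$ and to apply the same fiber interpretation directly to the degree-$q^n$ map $P^{\circ n}\colon D\to D$ at $y=0$, whose fiber is exactly $\Lambda_n$; this still rests on the same base-change-of-trace point, which is therefore the real content of the argument.
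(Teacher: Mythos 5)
Your proof is correct and rests on the same idea as the paper's: the trace $\Tr_{\calR^+/\phi^n(\calR^+)}$ commutes with base change along evaluation, so specializing $\phi^n(\psi^n(f))$ at $X$ with $P^{\circ n}(X)=0$ yields the sum of $f$ over the fiber $\Lambda_n$. The paper runs the "alternative that avoids the induction" you sketch at the end — using $\phi^n\circ\psi^n=\Tr_{\calR^+/\phi^n(\calR^+)}$ and the CRT decomposition $\calR^+/(P^{\circ n}(X))\cong\prod_{z\in\Lambda_n}\calR^+/(X-z)$ directly — so your primary route (pointwise formula for $\psi$ at a general $y$ plus induction on $n$) is a mild repackaging of the same argument.
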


\begin{proof}
We have $\phi^n \circ \psi^n (f) = \Tr_{\calR^+ / \phi^n(\calR^+)} f$. If $I$ is the ideal of $\phi^n(\calR^+)$ generated by $P^{\circ n}(X)$, then $\phi^n(\calR^+) / I = \Cp$ and $\calR^+ / I = \calR^+ / P^{\circ n}(X) = \prod_{z \in \Lambda_n} \calR^+ / (X-z)$.
\end{proof}

\begin{prop}
\label{psibound}
If $f \in \calR^+$ then $\vp(\psi^n(f)(0)) \geq V(f,\mu_{n+1}) + (n-1) \cdot v_1$.
\end{prop}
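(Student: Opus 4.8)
The plan is to prove this by induction on $n$. Since $\psi$ is $\Cp$-linear and $V(f,\mu)=\inf_m(\vp(f_m)+\mu m)$, and since lemma \ref{psisum} gives $\psi^n(f)(0)=\sum_{m\geq0}f_m\,\sigma_m^{(n)}$ with $\sigma_m^{(n)}:=\sum_{z\in\Lambda_n}z^m$, it suffices to treat the monomials $f=X^m$, i.e.\ to prove
\[ \vp\bigl(\sigma_m^{(n)}\bigr)\ \geq\ m\mu_{n+1}+(n-1)v_1\qquad\text{for all }m\geq0. \]
For $n=1$ this is clear: $\sigma_0^{(1)}=q$ with $\vp(q)\geq1\geq0$, and for $m\geq1$ the point $0$ contributes nothing while the $q-1$ points of $H_1$ all have valuation $\mu_1\geq\mu_2$, so $\vp(\sigma_m^{(1)})\geq m\mu_1\geq m\mu_2$.

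For the inductive step I would exploit the equality $\Lambda_n=P^{-1}(\Lambda_{n-1})$ (valid because $P^{\circ n}(z)=0\iff P^{\circ(n-1)}(P(z))=0$): grouping the points of $\Lambda_n$ by their image under $P$ and using $\sum_{P(z)=w}z^m=\psi(X^m)(w)$ gives
\[ \sigma_m^{(n)}\ =\ \sum_{w\in\Lambda_{n-1}}\psi(X^m)(w)\ =\ \sum_{j\geq0}\bigl[\psi(X^m)\bigr]_j\cdot\sigma_j^{(n-1)}, \]
where $[\psi(X^m)]_j$ denotes the $j$-th coefficient of $\psi(X^m)\in\calE^+_{\Cp}$. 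The term $j=0$ is favourable: $[\psi(X^m)]_0=\psi(X^m)(0)=\sigma_m^{(1)}$ and $\sigma_0^{(n-1)}=\card\Lambda_{n-1}=q^{n-1}$, so its valuation is at least $\vp(\sigma_m^{(1)})+(n-1)\vp(q)\geq m\mu_1+(n-1)\geq m\mu_{n+1}+(n-1)v_1$ (using $\mu_1\geq\mu_{n+1}$ and $\vp(q)\geq1\geq v_1$), which already exceeds the target. The point of the decomposition is that it is in this ``constant'' direction that the divisibility by $q$, hence by $p_1$, built into the trace becomes visible.

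For the terms $j\geq1$ one needs two further inputs. First, the inductive hypothesis, which bounds $\vp(\sigma_j^{(n-1)})$, together with the trivial bound $\vp(\sigma_j^{(n-1)})\geq j\mu_{n-1}$ (valid because $\mu_{n-1}$ is the smallest valuation occurring among the points of $\Lambda_{n-1}$). Second, a bound on the coefficients of $\psi(X^m)$: evaluating $\psi(X^m)$ on the sphere $\vp(y)=q\mu_1$ — at which the $q$ roots of $P(X)-y$ all still have valuation $\mu_1$ — gives $V(\psi(X^m),q\mu_1)\geq m\mu_1$, and combined with lemma \ref{psidiv} (which gives $\vp([\psi(X^m)]_j)\geq v_1$ for every $j$) this yields
\[ \vp\bigl([\psi(X^m)]_j\bigr)\ \geq\ \max\bigl(v_1,\ (m-qj)\mu_1\bigr)\qquad\text{for all }j\geq0. \]
Inserting these estimates into the expression for $\sigma_m^{(n)}$ and running a case analysis on $j$ (according to whether $j$ lies, roughly, above or below $m/q$) is what one must organise in order to close the induction.

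The delicate point, and the one I expect to be the real work, is exactly this last step. The sums $\sigma_m^{(n)}=\sum_{z\in\Lambda_n}z^m$ genuinely cancel, and a naive triangle inequality falls short of the asserted bound by precisely the factor $(n-1)v_1$; all of that extra divisibility has to be extracted from the trace structure of $\psi$, i.e.\ ultimately from the congruence $P(X)\equiv p_qX^q\bmod p_1$ (which is what forces $\psi$ into $p_1\cdot\OO_{\Cp}\dcroc{X}$, lemma \ref{psidiv}) and from the root-separation estimates of lemma \ref{srone} and corollary \ref{separoot}, and then matched carefully against the inductive estimate. In particular the plain inductive hypothesis on $\vp(\sigma_j^{(n-1)})$ seems somewhat too weak for intermediate values of $m$ (where neither it nor the coefficient bound on $\psi(X^m)$ alone is enough), so one is likely forced to prove a slightly stronger statement by induction — for instance one controlling all the low-degree coefficients of the iterates $\psi^n(X^m)$, not merely their value at $0$ — with the present proposition recovered by specializing to $j=0$.
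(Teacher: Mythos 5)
You are right to be suspicious of your own inductive step, and that suspicion marks a real gap: the plain inductive hypothesis, even combined with your coefficient bound $\vp([\psi(X^m)]_j)\geq\max(v_1,(m-qj)\mu_1)$ and the trivial bound $\vp(\sigma_j^{(n-1)})\geq j\mu_{n-1}$, fails for small positive $j$. Take for instance $m=q+1$, $j=1$: the inductive hypothesis gives $\vp(\sigma_1^{(n-1)})\geq\mu_n+(n-2)v_1$ and the coefficient bound gives $\vp([\psi(X^{q+1})]_1)\geq\mu_1$, so the term is controlled only by $\mu_1+\mu_n+(n-2)v_1$, whereas the target is $(q+1)\mu_{n+1}+(n-1)v_1$. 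The deficit reduces to $\mu_1-\mu_{n+1}\geq v_1$, i.e.\ $\frac{1}{q-1}(1-q^{-n})\geq 1$, which is false for $q\geq 3$. So one genuinely loses a $v_1$ at each step and the induction does not close without a stronger hypothesis — exactly the difficulty you flagged, but your proposal stops before resolving it.

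The paper's proof sidesteps induction entirely. After reducing to $f=X^i$ by linearity (your first step), it writes $Q_{n+1}(X)=\alpha_{n+1}\bigl(X^{q_{n+1}}+p_1R_{n+1}(X)\bigr)$ with $\alpha_{n+1}\in\OO_{\Cp}^\times$, sets $i=sq_{n+1}+r$ with $0\leq r<q_{n+1}$, and expands
\[ X^i=\bigl(\alpha_{n+1}^{-1}Q_{n+1}(X)-p_1R_{n+1}(X)\bigr)^sX^r=\sum_{k=0}^s Q_{n+1}(X)^k\,p_1^{s-k}\,F_k(X),\quad F_k\in\OO_{\Cp}\dcroc{X}. \]
Since $Q_{n+1}=\phi^n(Q_1)$, the projection formula $\psi^n(\phi^n(g)\,h)=g\,\psi^n(h)$ pulls the factor $Q_{n+1}^k$ out of $\psi^n$ as $Q_1^k$; evaluating at $0$ and using $Q_1(0)=p_1$ together with lemma \ref{psidiv} (which gives $\psi^n(F_k)(0)\in p_1^n\OO_{\Cp}$) yields $\psi^n(X^i)(0)\in p_1^{s+n}\OO_{\Cp}$. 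Since $sq_{n+1}\mu_{n+1}=sv_1$, this already gives $\vp(\psi^n(X^i)(0))\geq sv_1+nv_1=i\mu_{n+1}-r\mu_{n+1}+nv_1\geq V(X^i,\mu_{n+1})+(n-1)v_1$, with no induction and no loss. The key idea you were missing is this algebraic trick: rather than tracking how cancellation propagates through the iterates of $\psi$, one re-expands the monomial $X^i$ directly in powers of $Q_{n+1}=\phi^n(Q_1)$, so that the entire factor of $p_1^{s+n}$ is manifest at once. Your instincts about \emph{where} the divisibility comes from (the trace structure via lemma \ref{psidiv}, and the congruence $P\equiv p_qX^q\bmod p_1$) were right, but the way to harvest it is this one-shot factorisation, not an induction on $n$.
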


\begin{proof}
Since $V(\sum_{i \geq 0} f_i X^i,\mu_{n+1}) = \inf_{i \geq 0} V(f_i X^i,\mu_{n+1})$, it is enough to prove the claim for $f(X) = X^i$. 
Write $Q_{n+1}(X) = \alpha_{n+1} (X^{q_{n+1}} +p_1 R_{n+1}(X))$ for some $\alpha_{n+1} \in \OO_{\Cp}^\times$ and $R_{n+1} \in \OO_{\Cp}\dcroc{X}$ and write $i=s q_{n+1}+r$ with $0 \leq r \leq q_{n+1}-1$. We have 
\[ X^i = X^{s q_{n+1}+r} = (\alpha_{n+1}^{-1} \cdot Q_{n+1}(X)- p_1 R_{n+1}(X))^s X^r = \sum_{k=0}^s Q_{n+1}(X)^k p_1^{s-k} F_k(X), \]
for some $F_k(X) \in \OO_{\Cp} \dcroc{X}$, $0 \leq k \leq s$. Since $Q_{n+1} = \phi^n(Q_1)$ and $\psi^n(F_k)(0) \in p_1^n \OO_{\Cp}$ by lemma \ref{psidiv}, we have $(\psi^n X^i)(0) \in p_1^{s+n} \OO_{\Cp}$. Hence \[ \vp(\psi^n (X^i)(0)) \geq s q_{n+1} \mu_{n+1} + n \cdot v_1 = i \mu_{n+1} -r \mu_{n+1} + n \cdot v_1 \geq V(X^i,\mu_{n+1}) + (n-1) \cdot v_1. \qedhere \]
\end{proof}

\subsection{Construction of auxilliary functions}
\label{auxsub}

The proof of theorem \ref{lambdas} rests on the construction of certain elements of $\calR^+$ satisying precise growth conditions. The proofs in this {\S} are very similar to those of Fresnel and de Mathan.

\begin{defi}
\label{deford}
We say that $f \in \calR^+$ is of $P$-order $1^-$ if $V(f,\mu_n) + n \cdot v_1 \to + \infty$ as $n \to + \infty$.
\end{defi}

\begin{rema}
\label{logrowth}
The infinite product $X \cdot \prod_{n \geq 1} Q_n(X)/p_1$ converges to a function $\log_P(X) \in \calR^+$ that satisfies: $\{ V(f,\mu_n) + n \cdot v_1 \}_{n \geq 1}$ is bounded below. Hence a function of $P$-order $1^-$ grows just slightly less fast than $\log_P(X)$.
\end{rema}

\begin{prop}
\label{ordpsi}
If $f$ is of $P$-order $1^-$, then $\sum_{z \in \Lambda_n} f(z) \to 0$ as $n \to +\infty$.
\end{prop}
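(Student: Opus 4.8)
The plan is to combine the explicit bound of Proposition \ref{psibound} with Lemma \ref{psisum} and the defining growth condition of $P$-order $1^-$. By Lemma \ref{psisum}, $\sum_{z \in \Lambda_n} f(z) = \psi^n(f)(0)$, so it suffices to show $\vp(\psi^n(f)(0)) \to +\infty$ as $n \to +\infty$. Proposition \ref{psibound} gives $\vp(\psi^n(f)(0)) \geq V(f,\mu_{n+1}) + (n-1) v_1$. So the whole statement reduces to checking that $V(f,\mu_{n+1}) + (n-1) v_1 \to +\infty$, which follows from Definition \ref{deford}: by hypothesis $V(f,\mu_n) + n v_1 \to +\infty$, and $V(f,\mu_{n+1}) + (n-1) v_1 = \bigl(V(f,\mu_{n+1}) + (n+1) v_1\bigr) - 2 v_1$ differs from the $(n{+}1)$-st term of that sequence by the constant $2 v_1$, hence also tends to $+\infty$.

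So the proof is essentially a two-line chain: rewrite the sum via $\psi$, bound its valuation via Proposition \ref{psibound}, and observe that the resulting lower bound is a shift of the sequence appearing in the definition of $P$-order $1^-$. I would spell out just enough of the index bookkeeping to make the shift transparent.

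There is no real obstacle here — the substance has already been absorbed into Propositions \ref{psidiv}, \ref{psisum} and \ref{psibound}. The only thing to be slightly careful about is the exact form of the shift between $\mu_n$ and $\mu_{n+1}$ and the coefficient of $v_1$: Proposition \ref{psibound} produces $(n-1) v_1$ alongside $V(f,\mu_{n+1})$, whereas Definition \ref{deford} is phrased with $V(f,\mu_n) + n v_1$; one must check these match up to an additive constant, which they do since $\{V(f,\mu_n)+n v_1\}_n \to +\infty$ is equivalent to $\{V(f,\mu_{n+1})+(n-1)v_1\}_n \to +\infty$ (replace $n$ by $n{+}1$ and subtract $2v_1$). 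I will present it in that order: invoke Lemma \ref{psisum}, then Proposition \ref{psibound}, then conclude by the definition.

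\begin{proof}
By lemma \ref{psisum}, $\sum_{z \in \Lambda_n} f(z) = \psi^n(f)(0)$, so it is enough to show that $\vp(\psi^n(f)(0)) \to +\infty$ as $n \to +\infty$. By proposition \ref{psibound}, we have $\vp(\psi^n(f)(0)) \geq V(f,\mu_{n+1}) + (n-1) \cdot v_1$. Since $f$ is of $P$-order $1^-$, the sequence $\{ V(f,\mu_m) + m \cdot v_1 \}_{m \geq 1}$ tends to $+\infty$; applying this with $m = n+1$ gives that $V(f,\mu_{n+1}) + (n+1) \cdot v_1 \to +\infty$, and hence $V(f,\mu_{n+1}) + (n-1) \cdot v_1 = \bigl( V(f,\mu_{n+1}) + (n+1) \cdot v_1 \bigr) - 2 v_1 \to +\infty$ as well. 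Therefore $\vp(\psi^n(f)(0)) \to +\infty$, which is what we wanted.
\end{proof}
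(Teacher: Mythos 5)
Your proof is correct and follows exactly the route the paper takes — the paper's proof is the one-line remark ``This follows from lemma \ref{psisum} and prop \ref{psibound},'' and you have simply written out the same chain (reindex the sum via $\psi$, bound by Proposition \ref{psibound}, conclude from Definition \ref{deford}) with the index bookkeeping made explicit. Nothing to add.
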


\begin{proof}
This follows from lemma \ref{psisum} and prop \ref{psibound}.
\end{proof}

\begin{coro}
\label{nulsum}
If $f$ is of $P$-order $1^-$ and if $f(z) \to 0$ for $z \in \Lambda(P)$, then for all $i \geq 0$, we have $\sum_{z \in \Lambda(P)} z^i f(z) = 0$.
\end{coro}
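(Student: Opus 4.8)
The plan is to reduce the statement to Proposition \ref{ordpsi} applied to the function $g(X) = X^i f(X)$, after checking that $g$ inherits the two hypotheses placed on $f$, and then to pass from the partial sums over the finite sets $\Lambda_n$ to the sum over all of $\Lambda(P)$.

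First I would check that $g = X^i f$ is again of $P$-order $1^-$. Since $V(X^i f,\mu_n) = i\mu_n + V(f,\mu_n)$ and $\mu_n = v_1/q_n \to 0$ as $n \to +\infty$, the correction term $i\mu_n$ is bounded (in fact it tends to $0$), so $V(g,\mu_n) + n\cdot v_1 \to +\infty$ follows from the same property for $f$. Next, since every $z \in \Lambda(P) \subset D$ satisfies $|z| < 1$, we have $|g(z)| = |z|^i |f(z)| \leq |f(z)|$, so $g(z) \to 0$ for $z \in \Lambda(P)$ as well; in particular the sum $\sum_{z \in \Lambda(P)} z^i f(z)$ converges in the nonarchimedean sense (for every $\eps>0$ only finitely many terms have absolute value $\geq \eps$) and is independent of the ordering of $\Lambda(P)$. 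Proposition \ref{ordpsi} applied to $g$ then gives $\sum_{z \in \Lambda_n} z^i f(z) \to 0$ as $n \to +\infty$.

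It remains to identify this limit with $\sum_{z \in \Lambda(P)} z^i f(z)$. Since $P(0)=0$ we have $\Lambda_0 \subset \Lambda_1 \subset \cdots$ with $\Lambda(P) = \bigcup_n \Lambda_n$ and each $\Lambda_n$ finite, so for a nonarchimedean convergent sum one gets $\sum_{z \in \Lambda(P)} z^i f(z) = \lim_n \sum_{z \in \Lambda_n} z^i f(z)$: given $\eps>0$, the finite set $\{z \in \Lambda(P) : |z^i f(z)| \geq \eps\}$ lies in some $\Lambda_N$, and for $n \geq N$ the difference $\sum_{z \in \Lambda(P)} z^i f(z) - \sum_{z \in \Lambda_n} z^i f(z)$ is a sum over $\Lambda(P) \setminus \Lambda_n$ of terms of absolute value $<\eps$. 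Combining with the previous paragraph yields $\sum_{z \in \Lambda(P)} z^i f(z) = 0$. There is no serious obstacle in this argument; the only points requiring a little care are the verification that multiplication by $X^i$ preserves $P$-order $1^-$ (which works precisely because $\mu_n \to 0$) and the interchange of the limit over the exhaustion $\{\Lambda_n\}$ with the unconditionally convergent sum over $\Lambda(P)$, which is a standard nonarchimedean fact.
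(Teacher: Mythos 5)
Your proof is correct and follows the same route as the paper: reduce to Proposition \ref{ordpsi} applied to $X^i f(X)$ after checking that $X^i f$ remains of $P$-order $1^-$, then identify $\sum_{z \in \Lambda(P)} z^i f(z)$ with $\lim_n \sum_{z \in \Lambda_n} z^i f(z)$ using the hypothesis $f(z) \to 0$. You simply spell out the two steps (preservation of $P$-order $1^-$ via $\mu_n \to 0$, and the exhaustion argument for the convergent sum) that the paper leaves implicit.
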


\begin{proof}
If $i \geq 0$, then $X^i f(X)$ is also of $P$-order $1^-$. The result then follows from prop \ref{ordpsi} applied to $X^i f(X)$ since $\sum_{z \in \Lambda(P)} z^i f(z) = \lim_{n \to +\infty} \sum_{z \in \Lambda_n} z^i f(z)$.
\end{proof}

\begin{prop}
\label{bnfdm}
Take $n \geq 1$ and $0 \leq \lambda \leq 1$. 

There exists $B_n \subset H_n$ such that $\card B_n = \lfloor \lambda q_n \rfloor$, and such that for all $z \in H_n$ and $1 \leq k \leq n-1$, we have $\card \{ z' \in B_n$ such that $P^{\circ k}(z) = P^{\circ k}(z') \} \geq \lfloor \lambda q^k \rfloor$.
\end{prop}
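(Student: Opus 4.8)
The plan is to set aside all $p$-adic analysis and prove a purely combinatorial statement about a nested family of partitions of $H_n$.

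First I would record the tree structure of $H_n$. Since $P$ is LT-like, an examination of the Newton polygon of $P(X)-w$ shows that $P(X)=w$ has exactly $q$ solutions in $D$ for every $w\in D$, and these are simple because $P'$ is a unit of $\calE^+_{\Cp}$; thus $P\colon D\to D$ is surjective with every fibre of cardinality $q$, and more generally $P^{\circ k}\colon D\to D$ is surjective and exactly $q^k$-to-one. For $0\le k\le n-1$ this restricts to a surjection $H_n\to H_{n-k}$ all of whose fibres have cardinality $q^k$ (if $P^{\circ k}(z)\in H_{n-k}$ then $z$ has exact level $n$, so the fibre lies entirely in $H_n$). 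Define, for $0\le k\le n-1$, the equivalence relation $z\sim_k z'$ on $H_n$ by $P^{\circ k}(z)=P^{\circ k}(z')$. Then $\sim_0$ is equality, the $\sim_k$-classes are nested ($\sim_{k-1}$ refines $\sim_k$), a $\sim_k$-class has $q^k$ elements and for $k\ge1$ is the disjoint union of exactly $q$ many $\sim_{k-1}$-classes, and $\sim_{n-1}$ has exactly $q-1$ classes (indexed by $H_1$, each of size $q^{n-1}$) since $q_n=(q-1)q^{n-1}$. In this language the conclusion is exactly: $\card B_n=\lfloor\lambda q_n\rfloor$ and $\card(B_n\cap C)\ge\lfloor\lambda q^k\rfloor$ for every $\sim_k$-class $C$ and every $1\le k\le n-1$.

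The one arithmetic input is the elementary inequality $\lfloor qx\rfloor\ge q\lfloor x\rfloor$ for $x\ge0$ and $q\in\NN$; in particular $\lfloor\lambda q^k\rfloor\ge q\lfloor\lambda q^{k-1}\rfloor$ for $k\ge1$ and $\lfloor\lambda q_n\rfloor=\lfloor(q-1)\cdot\lambda q^{n-1}\rfloor\ge(q-1)\lfloor\lambda q^{n-1}\rfloor$; moreover $\lfloor\lambda q^k\rfloor\le q^k$ and $\lfloor\lambda q_n\rfloor\le q_n$. Using these I would build $B_n$ from the top down, first prescribing for each class $C$ the integer $b(C):=\card(B_n\cap C)$ and realizing the actual subset only at the end. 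Choose integers $b(C)$ for the $q-1$ many $\sim_{n-1}$-classes with $\lfloor\lambda q^{n-1}\rfloor\le b(C)\le q^{n-1}$ and $\sum_C b(C)=\lfloor\lambda q_n\rfloor$ (possible by the displayed inequalities). Then descend: given $b(C)$ for a $\sim_k$-class $C$ with $\lfloor\lambda q^k\rfloor\le b(C)\le q^k$ and $k\ge1$, write $C$ as the disjoint union of its $q$ many $\sim_{k-1}$-subclasses $C_1,\dots,C_q$ and pick integers $b(C_i)$ with $\lfloor\lambda q^{k-1}\rfloor\le b(C_i)\le q^{k-1}$ and $\sum_i b(C_i)=b(C)$; this is feasible because $q\lfloor\lambda q^{k-1}\rfloor\le\lfloor\lambda q^k\rfloor\le b(C)\le q^k=q\cdot q^{k-1}$. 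At the bottom each $\sim_1$-class $C$ has $q$ elements and $0\le b(C)\le q$; choose any $b(C)$ of them, and let $B_n$ be the union of these choices over all $\sim_1$-classes. A downward induction gives $\card(B_n\cap C)=b(C)$ for every class $C$, whence $\card B_n=\sum_{\sim_{n-1}\text{-classes }C}b(C)=\lfloor\lambda q_n\rfloor$ and $\card(B_n\cap C)=b(C)\ge\lfloor\lambda q^k\rfloor$ for each $\sim_k$-class, $1\le k\le n-1$. When $n=1$ the list of relations is empty and the condition is vacuous, so one simply takes any $B_1\subset H_1$ with $\card B_1=\lfloor\lambda(q-1)\rfloor\le q-1=\card H_1$.

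I do not expect a serious obstacle: once the nested-partition picture is in place the rest is bookkeeping. The two points needing care are (i) verifying that $P^{\circ k}$ really is exactly $q^k$-to-one from $H_n$ onto $H_{n-k}$, which is where one uses that $P$ is LT-like (Newton polygon of $P(X)-w$) and that $P'$ is a unit, so that there are no multiplicities; and (ii) the branching $q-1$ (rather than $q$) at the top level, which is accounted for precisely by $q_n=(q-1)q^{n-1}$ together with $\lfloor\lambda q_n\rfloor\ge(q-1)\lfloor\lambda q^{n-1}\rfloor$. Each distribution step in the construction is feasible thanks only to the single inequality $\lfloor qx\rfloor\ge q\lfloor x\rfloor$.
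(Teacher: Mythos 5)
Your proof is correct, and it is essentially the paper's greedy tree construction run in the opposite direction. The paper builds a nested chain $B^{(1)} \subset B^{(2)} \subset \cdots \subset B^{(n-1)}$ bottom-up: first choose $\lfloor\lambda q\rfloor$ elements in each $P$-fibre, then at step $k$ enlarge within each $P^{\circ k}$-fibre to size $\lfloor\lambda q^k\rfloor$, the enlargement being possible by $q\lfloor\lambda q^{k-1}\rfloor \leq \lfloor\lambda q^k\rfloor$; since $\card B^{(n-1)} = (q-1)\lfloor\lambda q^{n-1}\rfloor \leq \lfloor\lambda q_n\rfloor$, it finishes by padding with arbitrary extra elements of $H_n$ (harmless, as the constraints are one-sided lower bounds). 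You instead prescribe the intersection cardinalities $b(C)$ top-down, starting from the $q-1$ classes of $\sim_{n-1}$ with $\sum b(C) = \lfloor\lambda q_n\rfloor$ and refining through the $q$-ary tree, using the very same inequality $q\lfloor\lambda q^{k-1}\rfloor \leq \lfloor\lambda q^k\rfloor$ to keep each $b(C)$ in the admissible window $[\lfloor\lambda q^k\rfloor, q^k]$. The two arguments rest on identical structural facts (simple fibres of size $q^k$, which you justify a bit more carefully than the paper) and the same arithmetic lemma; your top-down packaging has the small advantage of hitting the cardinality $\lfloor\lambda q_n\rfloor$ exactly without a final padding step, at the cost of carrying the upper bound $b(C)\leq q^k$ as an extra invariant.
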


\begin{proof}
For every $y \in H_{n-1}$, there are $q$ elements $z \in H_n$ such that $P(z)=y$. For each $y \in H_{n-1}$, choose $\lfloor \lambda q \rfloor$ of those $z$, and let $B^{(1)} \subset H_n$ denote all the $z$ chosen this way. Suppose that $2 \leq k \leq n-1$ and that we have constructed a set $B^{(k-1)} \subset H_n$. For each $y \in H_{n-k}$, there are $q^k$ elements $z \in H_n$ such that $P^{\circ k}(z)=y$. For each $y \in H_{n-k}$, choose $\lfloor \lambda q^k \rfloor$ of them, including all those of $B^{(k-1)}$. This is possible as $q \lfloor \lambda q^{k-1} \rfloor \leq \lfloor \lambda q^k \rfloor$. There are $q-1$ elements in $H_1$ so that $\card B^{(n-1)} = (q-1) \lfloor \lambda q^{n-1} \rfloor \leq  \lfloor \lambda q_n \rfloor$. We can now add some elements of $H_n$ to $B^{(n-1)}$ to get a set $B_n$ satisfying the conditions of the prop.
\end{proof}

Let $\lambda$ and $B_n$ be as in prop \ref{bnfdm} and let $D_n(X) = \prod_{\omega \in B_n} (1-X/\omega)$.

\begin{lemm}
\label{valdn}
For $n \geq 1$ and $z \in H_n \setminus B_n$, we have $\vp (D_n(z)) > (n-1) \lambda v_1 - \mu_1$.
\end{lemm}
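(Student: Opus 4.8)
The plan is to estimate $\vp(D_n(z)) = \sum_{\omega \in B_n} \vp(1 - z/\omega)$ from below by bounding each term $\vp(1-z/\omega) = \vp(z-\omega) - \vp(\omega)$. Here $z \in H_n \setminus B_n$ and $\omega \in B_n \subset H_n$, so $\vp(z) = \vp(\omega) = \mu_n$, hence $\vp(1-z/\omega) = \vp(z - \omega) - \mu_n$. The key input is corollary \ref{separoot}: if $P^{\circ k}(z) = P^{\circ k}(\omega)$ for some $k$ with $1 \le k \le n-1$, then $\vp(z - \omega) \ge \mu_k$, and for $k = 0$ (no collapse at all, i.e. $P^{\circ k}$ never agrees before applying $P^{\circ n}$) we still have the trivial bound $\vp(z - \omega) \ge \mu_n$ since both lie in the disk of valuation $\ge \mu_n$... more precisely $z,\omega \in H_n$ forces $\vp(z-\omega) \ge \mu_n$ anyway. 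So each term is $\ge \mu_k - \mu_n$ where $k = k(\omega)$ is the largest index $\le n-1$ with $P^{\circ k}(z) = P^{\circ k}(\omega)$ (set $k=0$ if there is none, giving a term $\ge 0$).

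Next I would count, for each $k$, how many $\omega \in B_n$ satisfy $P^{\circ k}(z) = P^{\circ k}(\omega)$. By the defining property of $B_n$ in prop \ref{bnfdm}, for each $k$ with $1 \le k \le n-1$ there are at least $\lfloor \lambda q^k \rfloor$ such $\omega$. Since these sets are nested (agreement at level $k$ implies agreement at every lower level), the number of $\omega$ whose \emph{exact} level of first agreement is $k$, i.e. agreeing at level $k$ but not at level $k+1$, can be handled by Abel summation: write
\[
\vp(D_n(z)) = \sum_{\omega \in B_n} \bigl(\vp(z-\omega) - \mu_n\bigr) \ge \sum_{k=1}^{n-1} \#\{\omega \in B_n : P^{\circ k}(z) = P^{\circ k}(\omega)\} \cdot (\mu_k - \mu_{k+1}),
\]
where I use the telescoping identity $\mu_{k(\omega)} - \mu_n = \sum_{k=1}^{k(\omega)} (\mu_k - \mu_{k+1})$ together with $\mu_k - \mu_{k+1} > 0$ and $k(\omega) \le n-1$ — and I should be a little careful that $z \notin B_n$ guarantees $z \neq \omega$ so no term is $+\infty$ (it isn't needed for a lower bound, but it shows the product is genuinely nonzero at such $z$, which is the point of the lemma). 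Then bound the count below by $\lfloor \lambda q^k \rfloor > \lambda q^k - 1$, so
\[
\vp(D_n(z)) \ge \sum_{k=1}^{n-1} (\lambda q^k - 1)(\mu_k - \mu_{k+1}).
\]

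Finally I would evaluate this sum. Recall $\mu_k = v_1/q_k$ with $q_k = q^{k-1}(q-1)$, so $q^k \mu_k = q v_1/(q-1)$ is independent of $k$; thus $\sum_{k=1}^{n-1} \lambda q^k (\mu_k - \mu_{k+1})$ is a telescoping-ish sum that I can split, and $\sum_{k=1}^{n-1} (\mu_k - \mu_{k+1}) = \mu_1 - \mu_n$. A short computation should give that the main term grows like $(n-1)\lambda v_1$ (each of the $n-1$ summands $\lambda q^k(\mu_k - \mu_{k+1}) = \lambda q^k \mu_k (1 - 1/q) \cdot$(correction) contributes a fixed positive constant times $v_1$, and one checks the constant works out so that the total exceeds $(n-1)\lambda v_1$) while the error term $-\sum_{k=1}^{n-1}(\mu_k - \mu_{k+1}) = -(\mu_1 - \mu_n) > -\mu_1$, yielding $\vp(D_n(z)) > (n-1)\lambda v_1 - \mu_1$. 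The main obstacle is bookkeeping: getting the constant in $q^k(\mu_k - \mu_{k+1})$ exactly right so that the sum of $n-1$ such terms is at least $(n-1)\lambda v_1$ rather than a slightly smaller multiple; I expect $q^k(\mu_k - \mu_{k+1}) = v_1 \cdot \frac{q}{q-1}(1 - \frac{1}{q}) = v_1$ exactly (since $\mu_k = \frac{qv_1}{(q-1)q^k}$ gives $q^k\mu_k - q^k\mu_{k+1} = \frac{qv_1}{q-1} - \frac{v_1}{q-1} = v_1$), which makes the estimate come out cleanly.
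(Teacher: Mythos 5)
Your proposal follows exactly the paper's argument: write $\vp(D_n(z)) = \sum_{\omega \in B_n}(\vp(z-\omega)-\mu_n)$, bucket $\omega$ by how deeply it agrees with $z$ under iterates of $P$, bound each term using corollary \ref{separoot}, use proposition \ref{bnfdm} for the cardinality of each bucket, rearrange by Abel summation, and finish with $q^k(\mu_k-\mu_{k+1})=v_1$. The displayed inequality
\[
\vp(D_n(z)) \ge \sum_{k=1}^{n-1}\#\{\omega \in B_n: P^{\circ k}(z)=P^{\circ k}(\omega)\}\cdot(\mu_k-\mu_{k+1})
\]
and the closing computation are both correct and identical to the paper's. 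One consistent directional slip in the prose should be fixed, though it does not affect the final formula: since $P^{\circ k}(z)=P^{\circ k}(\omega)$ implies $P^{\circ (k+1)}(z)=P^{\circ (k+1)}(\omega)$, agreement propagates \emph{upward} in $k$, not downward. Thus the sets $W_k=\{\omega\in B_n: P^{\circ k}(z)=P^{\circ k}(\omega)\}$ are increasing in $k$; $k(\omega)$ should be the \emph{smallest} index of agreement (not the largest); "agreeing at level $k$ but not at level $k+1$" should read "at level $k$ but not at level $k-1$"; and the telescoping identity should be $\mu_{k(\omega)}-\mu_n=\sum_{k=k(\omega)}^{n-1}(\mu_k-\mu_{k+1})$, not with bounds $1$ to $k(\omega)$.
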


\begin{proof}
Let $W_k = \{ z' \in B_n$ such that $P^{\circ k}(z) = P^{\circ k}(z') \}$ and let $w_k = \card W_k$. Note that $w_0=0$ since $z \notin B_n$. If $P^{\circ k}(z) = P^{\circ k}(z')$, then $\vp(z-z') \geq \mu_k$ by coro \ref{separoot}. Since $B_n = (W_1 \setminus W_0) \sqcup \hdots \sqcup (W_n \setminus W_{n-1})$, we have 
\begin{multline*}
\vp(D_n(z))  \geq \sum_{k = 1}^n (w_k-w_{k-1}) (\mu_k - \mu_n) 
 = \sum_{k=1}^{n-1} w_k (\mu_k - \mu_{k+1})  
> (n-1) \lambda v_1 - \mu_1,
\end{multline*}
since $w_k >  \lambda q^k  -1$ for $0 \leq k \leq n-1$ and $\mu_k-\mu_{k+1} = v_1/q^k$.
\end{proof}

\begin{theo}
\label{constfun}
For all $\eps > 0$ and $m \geq 1$, there exists $f_{\eps,m} \in \calR^+$ such that 
\begin{enumerate}
\item $f_{\eps,m}(0)=-1$ and $f_{\eps,m}(z)=0$ for all $z \in \Lambda_m \setminus \{0\}$;
\item $f_{\eps,m}$ is of $P$-order $1^-$;
\item $f_{\eps,m}(z) \to 0$ for $z \in \Lambda(P)$;
\item $\vp(f_{\eps,m}(z)) \geq -\eps$ for all $z \in \Lambda(P)$.
\end{enumerate}
\end{theo}

\begin{proof}
Let $\delta_1=\cdots=\delta_m=0$ and for $n \geq m+1$, take $\delta_n=q^{-\ell(n)}$ where $\ell(n)$ is the smallest integer $\geq 1$ such that $q^{-\ell(n)} \leq \eps/2n$. We assume that $\eps<1$, so that $\delta_n < 1$ for all $n$. We can also replace $m$ by a larger value, so that $\ell(n) \leq n-1$ for all $n \geq m+1$. In particular, $\lfloor \delta_n q_n \rfloor = \delta_n q_n$ for all $n$.

Let $\lambda_k = 1 - \delta_k$. Take $B_k$ as in prop \ref{bnfdm} with $\lambda=\lambda_k$ and let $D_k(X) = \prod_{\omega \in B_k} (1-X/\omega)$. Let $f \in \calR^+$ be $-1$ times the function provided by prop \ref{fdmzer} with $\eta=\eps/2$. Since $B_k=H_k$ for $1 \leq k \leq m$, this function satisfies (1). 

We have $V(D_k,\mu_n) = 0$ if $k \geq n$, so that $V(f,\mu_n) \geq \sum_{k = 1}^n V(D_k,\mu_n) - \eps/2$. Since $V(D_k,\mu_n) = b_k(\mu_n-\mu_k)$ where $b_k = \card B_k$, we have 
\[ V(f,\mu_n) +n \cdot v_1 \geq v_1 \cdot \sum_{k=1}^n \delta_k + (b_1+\cdots+b_n)\mu_n-\eps/2 > v_1 \cdot \sum_{k=1}^n \delta_k -\eps/2. \]
Since $\sum_{k=1}^n \delta_k \to + \infty$ as $n \to + \infty$, $f$ satisfies (2). 
Write $f(X) = D_n(X) f_n(X)$. If $z \in B_n$, then $f(z)=0$, while if $z \in H_n \setminus B_n$, then $\vp(f(z)) = \vp(f_n(z)) + \vp(D_n(z))$, and $\vp(f_n(z)) \geq V(f_n,\mu_n) = V(f,\mu_n)$ since $V(D_n,\mu_n)=0$. We have $b_k = (1-\delta_k) q_k$, so 
\begin{multline*} 
V(f,\mu_n) \geq  \sum_{k = 1}^{n-1} (1-\delta_k) q_k (\mu_n-\mu_k) - \eps/2  \\ \geq \mu_1 - \mu_n - (n-1) v_1 + \sum_{k=1}^{n-1} \delta_k (v_1 - q_k \mu_n) - \eps/2
\end{multline*}

By lemma \ref{valdn}, we have $\vp(D_n(z)) \geq (n-1) (1-\delta_n) v_1 - \mu_1$, so that
\[ \vp(f(z)) \geq - \mu_n + \delta_n v_1 - n \delta_n v_1 +  \sum_{k=1}^{n-1} \delta_k (v_1 - q_k \mu_n)- \eps/2. \]
We have $v_1 - q_k \mu_n \geq v_1 \cdot (1-1/q)$ and $n \delta_n v_1 \leq \eps/2$ and $- \mu_n + \delta_n v_1 \geq 0$ and $\sum_{k=1}^n \delta_k \to + \infty$ as $n \to + \infty$, so that $f$ satisfies (3) and (4).
\end{proof}

We can now prove theorem \ref{lambdas}.

\begin{proof}[Proof of theorem \ref{lambdas}]
Let $\Lambda'_m = \Lambda(P) \setminus \Lambda_m$. We prove that $\Lambda'_m$ is an analytic boundary for all $m \geq 1$. By coro \ref{nulsum}, the function provided by theorem \ref{constfun} has the property that $\sum_{z \in \Lambda'_m} z^i f_{\eps,m}(z) = 0$ for all $i \geq 1$ and $\sum_{z \in \Lambda'_m} f_{\eps,m}(z) = 1$. 

Take $h(X)  = \sum_{i \geq 0} h_i X^i \in \calE^+_{\Cp}$. We have \[ \sum_{z \in \Lambda'_m} f_{\eps,m}(z) h(z) = \sum_{i \geq 0} h_i \sum_{z \in \Lambda'_m}  f_{\eps,m}(z) z^i = h_0. \]
Hence $\vp(h_0) \geq \inf_{z \in \Lambda'_m} \vp(h(z)) - \eps$. This holds for all $\eps>0$, so that $\vp(h_0) \geq \inf_{z \in \Lambda'_m} \vp(h(z))$. 

Applying the same reasoning to $(h(X)-h_0)/X$ and to $m_1 \geq m$ gives us 
\[ \vp(h_1) \geq \inf_{z \in \Lambda'_{m_1}} \vp(h(z)) - \mu_{m_1} \geq \inf_{z \in \Lambda'_m} \vp(h(z)) - \mu_{m_1}. \]
This holds for all $m_1 \geq m$, so that $\vp(h_1) \geq \inf_{z \in \Lambda'_m} \vp(h(z))$. We repeat this, and we get that $\vp(h_i) \geq \inf_{z \in \Lambda'_m} \vp(h(z))$ for all $i \geq 0$, so that $\| h \|_D \leq \sup_{z \in \Lambda'_m} |h(z)|$.
\end{proof}

\section{Applications to $p$-adic Fourier theory}
\label{fousec}

In this section, we give an application of the surjectivity of the $p$-adic Fourier transform to $p$-adic Fourier theory and the geometry of the character variety. 

\subsection{$p$-adic Fourier theory}
\label{subfou}

Let $F$ be a finite extension of $\Qp$ of degree $d$, with ring of integers $\OO_F$. We first extend the Fourier transform to $\OO_F$. Let $X_{\tor}$ denote the set of finite order characters $(\OO_F,+) \to (\Cp^\times,\times)$. Given $z \in c^0(X_{\tor},\Cp)$, its Fourier transform is the function $\calF(z) : \OO_F \to \Cp$ defined by $a \mapsto \sum_{g \in X_{\tor}} z_g \cdot g(a)$.

\begin{theo}
\label{fdmof}
The map $\calF : c^0(X_{\tor},\Cp) \to C^0(\OO_F,\Cp)$ is surjective.
\end{theo}

\begin{proof}
If we choose a basis $a_1,\hdots,a_d$ of $\OO_F$ over $\Zp$, then there are linear forms $c_1,\hdots,c_d : \OO_F \to \Zp$ (the dual basis of the $a_i$'s) such that every $a \in \OO_F$ can be written as $a= \sum_{i=1}^d c_i(a) \cdot a_i$. Every finite order character $\OO_F \to \Cp^\times$ is then of the form $a \mapsto \gamma_1^{c_1(a)} \cdots \gamma_d^{c_d(a)}$ with $\gamma_1,\hdots,\gamma_d \in \Gamma$. We therefore have 
\[ c^0(X_{\tor},\Cp) = c^0(\Gamma,\Cp) \hotimes \cdots \hotimes c^0(\Gamma,\Cp). \]
Likewise, the decomposition $\OO_F = \Zp \cdot a_1 \oplus \cdots \oplus \Zp \cdot a_d$ gives us an isomorphism \[ C^0(\OO_F,\Cp) = C^0(\Zp,\Cp) \hotimes \cdots \hotimes C^0(\Zp,\Cp). \]
The theorem now follows from the surjectivity (see \cite{FdM74,FdM75,FdM78}) of the Fourier transform $c^0(\Gamma,\Cp) \to C^0(\Zp,\Cp)$.
\end{proof}

We now turn to $p$-adic Fourier theory. Let $e$ be the ramification index of $F$, let $\pi$ be a uniformizer of $\OO_F$, and let $q = \card \OO_F/\pi$. Let $\LT$ be the Lubin-Tate formal $\OO_F$-module attached to $\pi$, let $X$ be a coordinate on $\LT$ and let $\log_{\LT}(X)$ be the logarithm of $\LT$.
Recall (see \S 3 and \S 4 of \cite{ST01} for what follows) that $\Hom_{\OO_{\Cp}} (\LT,\Gm) \neq \{ 0 \}$. Choosing a generator of this group gives a power series $G(X) \in X \cdot \OO_{\Cp}\dcroc{X}$ such that $G(X) = \Omega \cdot X + \cdots$, where $\Omega \in \OO_{\Cp}$ with $\vp(\Omega) = 1/(p-1)-1/e(q-1)$. In particular, $1 + G(X) = \exp(\Omega \cdot \log_{\LT}(X)) = \sum_{n \geq 0} P_n(\Omega) X^n$ where $P_n(Y) \in F[Y]$ is a polynomial of degree $n$ such that $P_n(\Omega \cdot \OO_F) \subset \OO_{\Cp}$. 

When $F=\Qp$ and $\LT=\Gm$, we have $\Omega=1$ and $P_n(Y) = \binom{Y}{n}$. The family $\{ \binom{Y}{n} \}_{n \geq 0}$ forms a Mahler basis of $\Zp$. In addition, by a theorem of Amice (see \cite{A64}), every locally analytic function $\Zp \to \Cp$ can be written as $x \mapsto \sum_{n \geq 0} c_n \binom{x}{n}$ where $\{c_n\}_{n \geq 0}$ is a sequence of $\Cp$ such that there exists $r>1$ satisfying $|c_n| \cdot r^n \to 0$. 

One of the main results of $p$-adic Fourier theory is the following generalization of Amice's theorem (prop 4.5 and theo 4.7 of \cite{ST01}).

\begin{theo}
\label{mahlocan}
If $\{c_m\}_{m \geq 0}$ is a sequence of $\Cp$ such that there exists $r>1$ satisfying $|c_m| \cdot r^m \to 0$, then $a \mapsto \sum_{m \geq 0} c_m P_m(a \Omega)$ is a locally $F$-analytic function $\OO_F \to \Cp$.

Conversely, every locally $F$-analytic function $\OO_F \to \Cp$ has a unique such expansion.
\end{theo}

If $z \in D$, then (see \S 3 of \cite{ST01}) the map $\kappa_z : \OO_F \to \Cp$ given by 
\[ \kappa_z(a) = 1+G([a](z)) = \sum_{n \geq 0} P_n(a \Omega) z^n \] 
is a locally $F$-analytic character $(\OO_F,+) \to (\Cp^\times,\times)$, and every such character is of this form for a unique $z \in D$. Furthermore, $\kappa_z$ is of finite order if and only if $z \in \LT[\pi^\infty]$ (hence the set $X_{\tor}$ of torsion characters $(\OO_F,+) \to (\Cp^\times,\times)$ corresponds to $\LT[\pi^\infty]$).

\begin{defi}
\label{deftlt}
Let $\calF : c^0(\LT[\pi^\infty],\Cp) \to C^0(\OO_F,\Cp)$ be the map given by $\calF(\lambda)(a) = \sum_{\omega \in \LT[\pi^\infty]} \lambda_\omega \cdot \kappa_\omega(a)$.
\end{defi}

\begin{prop}
\label{fdmlt}
The map $\calF : c^0(\LT[\pi^\infty],\Cp) \to C^0(\OO_F,\Cp)$ is surjective.
\end{prop}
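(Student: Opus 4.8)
The plan is to observe that this proposition is nothing more than theorem \ref{fdmof} transported along the Lubin--Tate parametrization of torsion characters: once the two maps called $\calF$ are matched up, the statement is immediate. So the real work is purely bookkeeping, identifying the index set $\LT[\pi^\infty]$ of definition \ref{deftlt} with the index set $X_{\tor}$ of theorem \ref{fdmof}.

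Concretely, I would first recall (following \cite{ST01}, as in the discussion above) that every finite order character $(\OO_F,+) \to (\Cp^\times,\times)$ is locally constant — its kernel has finite index, hence is open in $\OO_F$ — and is therefore in particular locally $F$-analytic. By Schneider and Teitelbaum's description, every locally $F$-analytic character $\OO_F \to \Cp^\times$ equals $\kappa_z$ for a unique $z \in D$, and $\kappa_z$ has finite order exactly when $z \in \LT[\pi^\infty]$. Thus $\omega \mapsto \kappa_\omega$ is a bijection $\LT[\pi^\infty] \simeq X_{\tor}$. Since the defining condition of $c^0$ only refers to the index set as a bare set, this induces a bijection $c^0(X_{\tor},\Cp) \simeq c^0(\LT[\pi^\infty],\Cp)$ sending $\{z_g\}_{g \in X_{\tor}}$ to $\{z_{\kappa_\omega}\}_{\omega \in \LT[\pi^\infty]}$. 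Under this identification the map $\calF$ of definition \ref{deftlt}, namely $\lambda \mapsto [a \mapsto \sum_{\omega \in \LT[\pi^\infty]} \lambda_\omega \kappa_\omega(a)]$, coincides term by term with the map $\calF$ of theorem \ref{fdmof}, namely $z \mapsto [a \mapsto \sum_{g \in X_{\tor}} z_g\, g(a)]$; both sums converge uniformly in $a$ to continuous functions because $|\kappa_\omega(a)| = 1$ for all $\omega$ and $a$. Surjectivity onto $C^0(\OO_F,\Cp)$ is then precisely the content of theorem \ref{fdmof}, and we are done.

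There is no genuine obstacle here beyond making sure the bijection of index sets really does carry one Fourier transform to the other, which is tautological since $\kappa_\omega$ is by construction the character attached to $\omega$; in effect theorem \ref{fdmof} and proposition \ref{fdmlt} are the same surjectivity statement about $c^0(X_{\tor},\Cp) \to C^0(\OO_F,\Cp)$, written using two different parametrizations of $X_{\tor}$ (a third being the one via $\Gamma^d$ used in the proof of theorem \ref{fdmof}). The only nonformal inputs are thus theorem \ref{fdmof} itself — which repackages Fresnel and de Mathan's theorem through the completed-tensor-product decompositions of $c^0(X_{\tor},\Cp)$ and $C^0(\OO_F,\Cp)$ — and Schneider and Teitelbaum's identification of the torsion characters with $\LT[\pi^\infty]$. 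Note that nothing about the analytic boundary $\Lambda([\pi]) = \LT[\pi^\infty]$ from theorem A is needed at this level of precision; that input enters only in the sharper sup-norm statement (theorem D).
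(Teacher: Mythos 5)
Your proof is correct and takes the same route as the paper's: identify $X_{\tor}$ with $\LT[\pi^\infty]$ via $\omega \mapsto \kappa_\omega$ and invoke theorem \ref{fdmof}. The paper states this in a single sentence; you merely spell out the bookkeeping that the bijection of index sets transports one Fourier transform to the other.
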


\begin{proof}
Since $X_{\tor} = \{ \kappa_\omega, \omega \in \LT[\pi^\infty] \}$, this follows from theorem \ref{fdmof}.
\end{proof}

\begin{theo}
\label{mahlsurj}
The map $c^0(\NN, \Cp) \to C^0(\OO_F,\Cp)$ given by $c \mapsto \sum_{m \geq 0} c_m P_m(\cdot \Omega)$ is surjective.
\end{theo}

\begin{proof}
Take $f \in C^0(\OO_F,\Cp)$. By prop \ref{fdmlt}, we can write $f = \sum_{\omega \in \LT[\pi^\infty]} \lambda_\omega \kappa_\omega$. We have $\kappa_\omega(a) = \sum_{n \geq 0} P_n(a \Omega) \omega^n$. This implies the corollary, with $c_m = \sum_{\omega \in \LT[\pi^\infty]} \lambda_\omega \omega^m$.
\end{proof}

\begin{prop}
\label{noninj}
If $F \neq \Qp$, the map $c^0(\NN, \Cp) \to C^0(\OO_F,\Cp)$ is not injective.
\end{prop}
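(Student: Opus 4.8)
The goal is to show that the map $c^0(\NN,\Cp) \to C^0(\OO_F,\Cp)$, $c \mapsto \sum_{m \geq 0} c_m P_m(\cdot\,\Omega)$, has nontrivial kernel when $F \neq \Qp$. The natural strategy is to produce a nonzero sequence $c = \{c_m\}_{m \geq 0}$ tending to $0$ for which the function $a \mapsto \sum_{m \geq 0} c_m P_m(a\Omega)$ vanishes identically on $\OO_F$. The cleanest way to get such a $c$ is to transport the problem through the Fourier picture already set up: by theorem~\ref{mahlsurj} (and its proof), the sequence $c$ attached to a family $\lambda \in c^0(\LT[\pi^\infty],\Cp)$ is $c_m = \sum_{\omega \in \LT[\pi^\infty]} \lambda_\omega\, \omega^m$, and then $\sum_m c_m P_m(a\Omega) = \sum_\omega \lambda_\omega \kappa_\omega(a) = \calF(\lambda)(a)$. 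So it suffices to exhibit a nonzero $\lambda \in c^0(\LT[\pi^\infty],\Cp)$ with $\calF(\lambda) = 0$ \emph{and} such that the resulting $c = \{c_m\}$ is itself nonzero.

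First I would observe that $\ker \calF \neq 0$ on $c^0(\LT[\pi^\infty],\Cp)$ when $F \neq \Qp$: the characters $\{\kappa_\omega\}_{\omega \in \LT[\pi^\infty]}$ are not linearly independent over $\Cp$, essentially because $\LT[\pi^\infty] \simeq (\Qp/\Zp)^d$ has $p$-torsion of rank $d > 1$, so among the $\kappa_\omega$ with $\omega \in \LT[\pi]$ there are more than $p$ characters but they all factor through the finite group $\OO_F/\pi$ of order $q$; concretely, the standard Fresnel--de Mathan kernel relation $\sum_{\gamma^p = 1} z_\gamma \gamma^a = 0$ in one cyclotomic variable already produces, via the tensor decomposition of the proof of theorem~\ref{fdmof}, a nonzero element of $\ker(\calF : c^0(X_{\tor},\Cp) \to C^0(\OO_F,\Cp))$ supported on torsion characters of order dividing $p$. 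Transporting along $X_{\tor} = \{\kappa_\omega\}$ gives a nonzero finitely-supported $\lambda$ with $\calF(\lambda)=0$, supported on $\LT[\pi]$.

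The remaining point, and the one requiring care, is to check that such a $\lambda$ does not also lie in the kernel of the map $\lambda \mapsto c$, i.e.\ that $c_m = \sum_\omega \lambda_\omega \omega^m$ is not identically zero. Equivalently, the power series $H(Z) = \sum_{m \geq 0} c_m Z^m = \sum_{\omega \in \LT[\pi]} \lambda_\omega/(1 - \omega Z)$ (a rational function, since $\lambda$ is finitely supported) must be nonzero; but $H$ vanishes only if $\lambda_\omega = 0$ for every $\omega$, since distinct $\omega \in \LT[\pi] \subset D$ give partial fractions with distinct poles $1/\omega$. Hence $c \neq 0$, and since $\lambda$ is finitely supported, $c$ even tends to $0$ trivially, so $c \in c^0(\NN,\Cp)$. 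This shows $c$ is a nonzero element of the kernel of $c^0(\NN,\Cp) \to C^0(\OO_F,\Cp)$.

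The main obstacle is making the first step genuinely produce something nonzero: one must be sure the kernel relation chosen is supported on \emph{more than one} torsion character (otherwise $\lambda$ is a single nonzero $\lambda_\omega \kappa_\omega$, which is never $0$), and this is exactly where $F \neq \Qp$, equivalently $d \geq 2$ equivalently $\LT[\pi]$ strictly containing the cyclic group generated by a single $\Qp$-line, enters — one needs the full Fresnel--de Mathan relation on the $p$-torsion, not merely linear independence failing in a trivial way. Once the existence of an honest multi-term kernel relation for $\calF$ on $X_{\tor}$ is in hand (via the tensor-product argument of theorem~\ref{fdmof}), the translation to a nonzero $c \in \ker(c^0(\NN,\Cp) \to C^0(\OO_F,\Cp))$ via the partial-fraction argument above is routine.
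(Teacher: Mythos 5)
Your proposal has a genuine gap at its first step, and the gap is fatal to the whole strategy. You claim there is a ``standard Fresnel--de Mathan kernel relation $\sum_{\gamma^p=1} z_\gamma \gamma^a = 0$'', i.e.\ a nonzero element of $\ker\calF$ supported on the $p$-torsion of $\Gamma$, and more generally a nonzero \emph{finitely supported} $\lambda$ on $\LT[\pi]$ with $\calF(\lambda)=0$. No such relation exists: the characters $a \mapsto \kappa_\omega(a)$ for $\omega$ ranging over $\LT[\pi^n]$ are $q^n$ \emph{distinct} characters of the finite group $\OO_F/\pi^n$ (distinct because $z\mapsto\kappa_z$ is a bijection), and distinct characters of a finite abelian group are linearly independent over any field. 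So $\calF$ is injective on finitely supported sequences, regardless of $F$. The noninjectivity of Fresnel--de Mathan is necessarily a phenomenon of infinite supports, where your partial-fraction argument (``distinct poles $1/\omega$'') no longer applies --- the sum $\sum_\omega \lambda_\omega/(1-\omega Z)$ is no longer a rational function, and vanishing of the resulting power series does not force $\lambda=0$. Indeed, for $F=\Qp$ the map $\lambda \mapsto c$ has exactly the same (nonzero) kernel as $\calF$, so the implication ``$c=0 \Rightarrow \lambda=0$'' is simply false for the $\lambda$'s you would need.

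Even granting some $\lambda\in\ker\calF$ of infinite support, you have given no argument that $c(\lambda)\neq 0$, and it is not a priori clear that such a $\lambda$ must exist; this is really the whole content of the proposition. The paper's proof takes an entirely different route: it argues by contradiction that if the map were injective, the open mapping theorem would make its inverse bounded, and then computes the coefficients $c_{k,n}$ of the expansion of $\Id_{\pi^n\OO_F}$ explicitly via the $\psi$-operator (lemma~\ref{psisum} and lemma~\ref{supsi}), showing $\sup_k |c_{k,n}| = |(\pi/q)^n|$, which is unbounded precisely when $\vp(q)>\vp(\pi)$, i.e.\ when $F\neq\Qp$. You would need to replace your step 1 with a genuine quantitative estimate of this kind; as written, the plan does not work.
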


\begin{proof}
If the map was injective, it would be a topological isomorphism by the open mapping theorem. For $n \geq 0$, we have
\[ \Id_{\pi^n \OO_F}(a) = q^{-n} \cdot \sum_{[\pi^n](\omega)=0} \kappa_{\omega}(a) = \sum_{k \geq 0} c_{k,n} P_k(a \Omega) \] with $c_{k,n} = q^{-n} \sum_{[\pi^n](\omega)=0} \omega^k$ (and no other choice if the map is injective). 

Take $P(X) = [\pi](X)$ and let $\psi$ be as in \S \ref{polysub}. By lemma \ref{psisum}, we have $c_{k,n} =  q^{-n} \cdot \psi^n(X^k)(0)$. By lemma \ref{supsi} below, we have $\sup_{k \geq 0} |\psi^n(X^k)(0)| = |\pi^n|$, so that $\sup_{k \geq 0} |c_{k,n}| = |(\pi/q)^n|$ is unbounded as $n \to + \infty$ if $\vp(q) > \vp(\pi)$. 
\end{proof}

\begin{lemm}
\label{supsi}
We have $\sup_{k \geq 0} |\psi^n(X^k)(0)| = |\pi^n|$
\end{lemm}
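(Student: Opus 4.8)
The plan is to prove the two inequalities $\sup_{k\geq 0}|\psi^n(X^k)(0)|\leq|\pi^n|$ and $\sup_{k\geq 0}|\psi^n(X^k)(0)|\geq|\pi^n|$ separately. Throughout, $P=[\pi]$, so $p_1=\pi$ and $v_1=\vp(\pi)$, whence $\vp(\pi^n)=nv_1$.

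For the upper bound I would just iterate lemma \ref{psidiv}: since $\psi(\OO_{\Cp}\dcroc{X})\subseteq\pi\OO_{\Cp}\dcroc{X}$ and $\psi$ is $\Cp$-linear (hence commutes with multiplication by the constant $\pi$), one gets $\psi^n(\OO_{\Cp}\dcroc{X})\subseteq\pi^n\OO_{\Cp}\dcroc{X}$. In particular $\psi^n(X^k)(0)\in\pi^n\OO_{\Cp}$, so $\vp(\psi^n(X^k)(0))\geq nv_1$ for every $k\geq 0$, which already gives $\sup_{k}|\psi^n(X^k)(0)|\leq|\pi^n|$.

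For the matching lower bound the idea is to exhibit a single exponent for which equality holds, namely $k=q^n-1$. Write $N=q^n-1=\card(\Lambda_n\setminus\{0\})$; the elements of $\Lambda_n$ are simple roots of $P^{\circ n}$, so the nonzero ones $\omega_1,\dots,\omega_N$ are pairwise distinct, and by lemma \ref{psisum} we have $\psi^n(X^k)(0)=\sum_{i=1}^N\omega_i^k=:p_k$ for $k\geq 1$. Let $e_1,\dots,e_N$ be the elementary symmetric functions of $\omega_1,\dots,\omega_N$. Newton's identity for the monic degree-$N$ polynomial $\prod_i(X-\omega_i)$ reads
\[ p_N=\sum_{j=1}^{N-1}(-1)^{j+1}e_j\,p_{N-j}+(-1)^{N+1}N\,e_N. \]
Here $e_N=\pm\prod_i\omega_i$ has valuation $\sum_{\ell=1}^n q_\ell\mu_\ell=nv_1$, and $N=q^n-1$ is prime to $p$, so $\vp(Ne_N)=nv_1$; on the other hand, for $1\leq j\leq N-1$ the function $e_j$ has positive valuation (it is a sum of products of $j$ of the $\omega_i$, each of valuation $\geq\mu_n>0$) and $\vp(p_{N-j})\geq nv_1$ by the upper bound already proved, so every term of the sum has valuation $>nv_1$. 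By the ultrametric inequality $\vp(p_N)=nv_1$, i.e.\ $|\psi^n(X^{q^n-1})(0)|=|\pi^n|$, which gives the lower bound and finishes the proof.

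The only delicate point is that the term $\pm Ne_N$ in Newton's identity is genuinely the dominant one; this relies on the two inputs that $q^n-1$ is a unit of $\OO_{\Cp}$ and that the lower power sums $p_1,\dots,p_{N-1}$ are already divisible by $\pi^n$ — the latter being exactly the content of the upper bound. I do not anticipate any other obstacle.
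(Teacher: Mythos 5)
Your argument is correct, and for the lower bound it takes a genuinely different route from the paper's (the upper bound, iterating lemma \ref{psidiv}, is the same in both). The paper's proof produces a single auxiliary power series in $\OO_{\Cp}\dcroc{X}$ whose image under $\psi^n$ has constant term of valuation exactly $n\vp(\pi)$, and then deduces the lower bound by expanding that series in powers of $X$; you instead pin down an explicit exponent $k=q^n-1$ at which the supremum is attained, via Newton's identity for the power sums of the nonzero $\pi^n$-torsion points together with lemma \ref{psisum}. Your version is more self-contained (only lemma \ref{psisum} plus elementary symmetric functions) and has the bonus of identifying an explicit achieving index. It also sidesteps a small glitch in the paper's proof as written: since $\psi\circ\phi=q\cdot\Id$ (not $\pi\cdot\Id$), one in fact has $\psi^n(f\circ P^{\circ(n-1)})=q^{n-1}\psi(f)$, and with $f=P(X)/X$ this only yields $\psi^n(Q_n)(0)=q^{n-1}\pi$, which for $F\neq\Qp$ is strictly smaller than $\pi^n$ in absolute value; the auxiliary function that does work in the paper's style is $P^{\circ n}(X)/X=Q_1\cdots Q_n$, whose value under $\psi^n$ at $0$ is $p_1^n=\pi^n$ by lemma \ref{psisum}, since it vanishes on $\Lambda_n\setminus\{0\}$. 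One cosmetic remark on your write-up: $e_N=\prod_i\omega_i$ exactly, with no sign ambiguity, though this is immaterial for valuations.
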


\begin{proof}
Since $\psi(\OO_{\Cp} \dcroc{X}) \subset \pi \cdot \OO_{\Cp} \dcroc{X}$ by lemma \ref{psidiv}, we have one inequality. Conversely, $\psi^n(f \circ P^{\circ (n-1)}) = \pi^{n-1} \psi(f)$, and if $f(X)=P(X)/X$, then $\psi(f)(0)=\pi$.
\end{proof}

\subsection{The Peano map}
\label{peano}

Recall that every element of $C^0(\Zp,\Cp)$ can be written in one and only one way as $x \mapsto \sum_{n \geq 0} \lambda_n \binom{x}{n}$ where $\lambda_n \in \Cp$ and $\lambda_n \to 0$. 

Let $T : C^0(\Zp,\Cp) \to C^0(\OO_F,\Cp)$ be the map given by \[ T : \left[ x \mapsto \sum_{n \geq 0} \lambda_n \binom{x}{n} \right] \mapsto \left[ a \mapsto \sum_{n \geq 0} \lambda_n P_n( a \Omega) \right]. \]

We can now prove theorem B.

\begin{coro}
\label{thb}
The map $T : C^0(\Zp,\Cp) \to C^0(\OO_F,\Cp)$ is surjective, and noninjective if $F \neq \Qp$.
\end{coro}

\begin{proof}
This follows from theorem \ref{mahlsurj} and prop \ref{noninj}.
\end{proof}

We identify the dual of the $\Cp$-Banach space $C^0(\Zp,\Cp)$ with $\calE^+_{\Cp}$ via the Amice transform. Let $\Lambda(\OO_F)$ denote the space $\Cp \otimes_{\OO_{\Cp} }\OO_{\Cp} \dcroc{\OO_F}$ of $\Cp$-valued measures on $\OO_F$, so that $\Lambda(\OO_F)$ is the dual of $C^0(\OO_F,\Cp)$ (and note that $\Lambda(\Zp) \simeq\calE^+_{\Cp}$). If $a_1,\hdots,a_d$ is a basis of $\OO_F$ over $\Zp$, the ring $\Lambda(\OO_F)$ is isomorphic to $\Cp \otimes_{\OO_{\Cp}}\OO_{\Cp} \dcroc{X_1,\hdots,X_d}$ where $X_i = \delta_{a_i} - \delta_0$ (note that $\delta_0=1$). There is an algebra homomorphism $\Lambda(\OO_F) \to \calE^+_{\Cp}$ that sends $\delta_b-1$ to $G([b](X))$, and by lemma 1.15 of \cite{BSX}, this map is injective. 

\begin{prop}
\label{peandual}
The dual map $T' : \Lambda(\OO_F) \to \calE^+_{\Cp}$ is the above inclusion.
\end{prop}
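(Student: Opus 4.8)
The plan is to unwind the duality pairings on both sides and check that $T'$ agrees with the given algebra homomorphism $\iota : \Lambda(\OO_F) \to \calE^+_{\Cp}$ (the one sending $\delta_b - 1 \mapsto G([b](X))$) on the topological generators $\delta_b$, $b \in \OO_F$ — equivalently on the family $\{\delta_{a_i} - 1\}$, or even just on all of $\{\delta_b\}_{b\in\OO_F}$, which spans a dense subspace of $\Lambda(\OO_F)$. Since both $T'$ and $\iota$ are continuous $\Cp$-linear (indeed algebra) maps, equality on $\{\delta_b\}$ forces equality everywhere.

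First I would recall the pairing conventions: under the Amice transform, a measure $\nu \in \Lambda(\Zp) = \calE^+_{\Cp}$ pairs with $\phi \in C^0(\Zp,\Cp)$ via $\langle \nu, \phi\rangle = \nu(\phi)$, and if $\phi = [x \mapsto \sum_n \lambda_n \binom{x}{n}]$ then the coefficient of $X^n$ in $\calA_\nu$ is $\nu(\binom{\cdot}{n})$, so $\langle \calA_\nu, \phi\rangle = \sum_n \lambda_n \cdot (\text{coeff of } X^n \text{ in } \calA_\nu)$. On the $\OO_F$ side, the pairing of $\mu \in \Lambda(\OO_F)$ with $g \in C^0(\OO_F,\Cp)$ is $\mu(g)$, and the Dirac measure $\delta_b$ pairs with $g$ by evaluation: $\langle \delta_b, g\rangle = g(b)$. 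Then, by definition of the dual map, for $\phi = [x \mapsto \sum_n \lambda_n \binom{x}{n}] \in C^0(\Zp,\Cp)$ and $b \in \OO_F$,
\[
\langle T'(\delta_b), \phi \rangle \;=\; \langle \delta_b, T(\phi) \rangle \;=\; (T\phi)(b) \;=\; \sum_{n \geq 0} \lambda_n\, P_n(b\,\Omega).
\]
On the other hand, $\iota(\delta_b) = 1 + G([b](X)) = \sum_{n\geq 0} P_n(b\,\Omega)\, X^n$, using the defining identity $\exp(\Omega \cdot \log_{\LT}(Y)) = \sum_n P_n(\Omega) Y^n$ evaluated at $Y = [b](X)$ together with $\log_{\LT}([b](X)) = b \cdot \log_{\LT}(X)$ (so that $P_n(b\Omega)$ is exactly the coefficient of $X^n$). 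Hence the coefficient of $X^n$ in $\iota(\delta_b)$ is $P_n(b\Omega)$, and pairing $\iota(\delta_b) \in \calE^+_{\Cp}$ against $\phi$ via the Amice pairing gives precisely $\sum_n \lambda_n P_n(b\Omega)$. This matches $\langle T'(\delta_b), \phi\rangle$ for all $\phi$, so $T'(\delta_b) = \iota(\delta_b)$.

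Finally I would conclude: the $\delta_b$ span a dense subspace of $\Lambda(\OO_F)$ (equivalently $\Lambda(\OO_F) = \Cp\otimes_{\OO_{\Cp}}\OO_{\Cp}\dcroc{X_1,\dots,X_d}$ is topologically generated by the $\delta_{a_i}$), and both $T'$ (as the dual of a bounded map of Banach spaces) and $\iota$ are continuous, so they coincide on all of $\Lambda(\OO_F)$. The only mildly delicate point is bookkeeping with the pairing normalizations — making sure the Amice transform is used with the same sign/variable conventions on both $\Lambda(\Zp)$ and the target, and that "coefficient of $X^n$" in $1+G([b](X))$ is genuinely $P_n(b\Omega)$ and not some rescaled version; once those identifications are pinned down the computation is immediate. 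There is no real obstacle here — the content is entirely in correctly transporting the two duality pairings through the Amice transform and the series identity defining the $P_n$.
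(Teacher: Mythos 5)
Your proof is correct and follows essentially the same route as the paper: both compute $T'(\delta_b)$ on the Mahler basis elements $x \mapsto \binom{x}{n}$ to see that its Amice transform is $\sum_{n\geq 0} P_n(b\Omega) X^n = 1+G([b](X))$, and then conclude (explicitly in your case, implicitly in the paper's) by density of the $\delta_b$ and continuity of $T'$. Your extra care with the pairing conventions and the density step is fine but not a substantive departure.
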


\begin{proof}
Take $b \in \OO_F$. We have $T'(\delta_b) ( x \mapsto \binom{x}{n} ) = \delta_b ( a \mapsto P_n( a \Omega)) = P_n(b \Omega)$ so that the image of $\delta_b$ in $\calE^+_{\Cp}$ is $\sum_{n \geq 0} P_n(b \Omega) X^n = 1+G([b](X))$.
\end{proof}

\begin{prop}
\label{imsprcl}
The image of $T' : \Lambda(\OO_F) \to \calE^+_{\Cp}$ is closed in $\calE^+_{\Cp}$.
\end{prop}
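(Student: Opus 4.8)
The plan is to show that $T'$ is a closed embedding by combining the injectivity of $T'$ (already noted, from lemma 1.15 of \cite{BSX}) with the surjectivity of $T$ (corollary \ref{thb}) via the open mapping theorem. Concretely, since $T : C^0(\Zp,\Cp) \to C^0(\OO_F,\Cp)$ is a continuous surjection of $\Cp$-Banach spaces, the open mapping theorem gives a constant $c>0$ such that every $g \in C^0(\OO_F,\Cp)$ has a preimage $f$ with $\|f\| \leq c \|g\|$; equivalently, the induced map $\overline{T} : C^0(\Zp,\Cp)/\ker T \to C^0(\OO_F,\Cp)$ is a topological isomorphism. Dualizing, $\overline{T}' : \Lambda(\OO_F) \to (C^0(\Zp,\Cp)/\ker T)'$ is a topological isomorphism onto the dual of the quotient, which is precisely the annihilator $(\ker T)^\perp \subset \calE^+_{\Cp}$ — and this annihilator is a closed subspace. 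Since $\overline{T}'$ agrees with $T'$ under the obvious identifications, the image of $T'$ is $(\ker T)^\perp$, hence closed.

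First I would recall the standard functional-analytic fact for Banach spaces over a spherically complete (or more generally any) nonarchimedean field: if $T : E \to F$ is a continuous linear surjection of Banach spaces, then $T'$ is a topological embedding with closed image equal to the annihilator of $\ker T$. The one point needing a small remark is that $\Cp$ is not spherically complete, so one should be slightly careful; however, the open mapping theorem holds for continuous surjections between Banach spaces over any complete nontrivially valued field (it only uses the Baire category theorem on the source), and the identification of the dual of a quotient $E/N$ with $N^\perp$ is purely formal and valid over $\Cp$. So the argument goes through verbatim. Alternatively, and perhaps more cleanly, one can argue directly: by the open mapping theorem there is $c>0$ with $T(\text{unit ball}) \supseteq c \cdot (\text{unit ball of } F)$, so for $\mu \in \Lambda(\OO_F)$ one gets $\|\mu\| \leq c^{-1} \|T'(\mu)\|$, i.e. $T'$ is bounded below; a bounded-below continuous linear map between Banach spaces has closed image.

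The key steps, in order: (1) invoke corollary \ref{thb} for surjectivity of $T$ and the open mapping theorem to get that $T$ is open; (2) deduce that $T'$ is bounded below, i.e. there is $c>0$ with $\|T'(\mu)\|_{\calE^+_{\Cp}} \geq c \|\mu\|_{\Lambda(\OO_F)}$ for all $\mu$; (3) conclude that the image of $T'$ is complete, hence closed in $\calE^+_{\Cp}$ (a closed subspace of a Banach space is exactly one that is complete for the induced metric, and the image of a bounded-below map from a complete space is complete). I expect no serious obstacle here; the only thing to be mindful of is stating the open mapping theorem over $\Cp$ correctly — it does hold, since it relies only on completeness of the source $C^0(\Zp,\Cp)$ and the Baire category theorem, not on spherical completeness. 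This proposition is essentially a formal consequence of theorem B plus the injectivity of $T'$ from \cite{BSX}, and the proof should be just a few lines.

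\begin{proof}
By corollary \ref{thb}, the map $T$ is a continuous surjection of $\Cp$-Banach spaces. By the open mapping theorem (whose proof over a complete nontrivially valued field only uses the Baire category theorem on the source), there exists $c>0$ such that every $g \in C^0(\OO_F,\Cp)$ has a preimage $f \in C^0(\Zp,\Cp)$ with $\|f\| \leq c^{-1} \|g\|$. Hence for every $\mu \in \Lambda(\OO_F)$ and every such pair $(f,g)$ we have
\[ |\mu(g)| = |T'(\mu)(f)| \leq \|T'(\mu)\| \cdot \|f\| \leq c^{-1} \|T'(\mu)\| \cdot \|g\|, \]
and taking the supremum over $g$ in the unit ball gives $\|\mu\| \leq c^{-1} \|T'(\mu)\|$. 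Thus $T'$ is bounded below, so it is a topological embedding, and its image is the continuous image of a complete space under a map with continuous inverse on the image, hence complete, hence closed in $\calE^+_{\Cp}$.
\end{proof}
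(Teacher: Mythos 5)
Your proof is correct, and it takes a genuinely different route from the paper's. The paper invokes a general closed range theorem (prop \ref{clrgpinv}, from Henr\'{\i}quez--Navarro--Aguayo-Garrido), whose statement requires the target space to be of countable type --- a hypothesis that compensates for the failure of Hahn--Banach over the non-spherically-complete field $\Cp$ when one only knows that $\im(T)$ is closed. You instead exploit the stronger fact that $T$ is actually \emph{surjective}: the open mapping theorem (which, as you correctly note, holds over any complete nontrivially valued field since it only uses Baire category on the source) gives the bound $\|f\| \leq c^{-1}\|g\|$ on a preimage, and then the purely formal chain $|\mu(g)| = |T'(\mu)(f)| \leq \|T'(\mu)\|\,\|f\| \leq c^{-1}\|T'(\mu)\|\,\|g\|$ shows $T'$ is bounded below, hence a topological embedding with complete (therefore closed) image. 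This computation uses no Hahn--Banach, no countable-type hypothesis, and no reference to \cite{HNA}, so for this specific proposition your argument is more elementary and self-contained than the paper's. (One small remark: the injectivity of $T'$ you mention drawing from \cite{BSX} is not actually needed as an input --- it already follows formally from the surjectivity of $T$, or from your bounded-below estimate --- but this redundancy does no harm.) The paper's appeal to the general closed range theorem would become necessary only in a situation where $\im(T)$ is known to be closed but not all of $C^0(\OO_F,\Cp)$.
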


\begin{proof}
Since $T$ is surjective, and $C^0(\OO_F,\Cp)$ is a $\Cp$-Banach space of countable type, $T'$ has closed image by prop \ref{clrgpinv} below (the closed range theorem).
\end{proof}

\begin{prop}
\label{clrgpinv}
If $T : X \to Y$ is a continuous map of $\Cp$-Banach spaces, and if $Y$ is of countable type and $\im(T)$ is closed in $Y$, then $\im(T')$ is closed in $X'$.
\end{prop}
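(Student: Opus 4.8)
The final statement to prove is Proposition~\ref{clrgpinv}: the closed range theorem for $\Cp$-Banach spaces (with a countable-type hypothesis on $Y$).

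\medskip

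The plan is to reduce the statement to the standard closed range theorem over a spherically complete field, or failing that, to prove it directly using the open mapping theorem together with the good duality properties of Banach spaces of countable type. First I would recall that $\Cp$ is not spherically complete, so one cannot cite the Hahn--Banach-based closed range theorem verbatim; this is precisely why the countable-type hypothesis on $Y$ is imposed, since for spaces of countable type the Hahn--Banach theorem does hold over $\Cp$ (every closed subspace is the kernel of a projection, and functionals extend), and the dual pairing between $Y$ and $Y'$ is ``good enough''. So the first step is to replace $Y$ by the closure $Z := \overline{\im(T)}$, which by hypothesis equals $\im(T)$; since $Y$ is of countable type, so is its closed subspace $Z$, and the restriction map $Y' \to Z'$ is surjective (Hahn--Banach for countable type). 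Hence $\im(T') = \im((T\colon X \to Z)')$ composed with a surjection downstream, and we may assume $T\colon X \to Y$ is surjective onto all of $Y$.

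\medskip

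Next, with $T$ surjective, the open mapping theorem gives that $T$ is open, so $X/\ker T \xrightarrow{\sim} Y$ is a topological isomorphism. Dualizing, $T'\colon Y' \to X'$ factors as $Y' \xrightarrow{\sim} (X/\ker T)'$, and $(X/\ker T)'$ is canonically identified with $(\ker T)^\perp = \{\varphi \in X' : \varphi|_{\ker T} = 0\}$, the annihilator of $\ker T$ in $X'$. The content is then that $(\ker T)^\perp$ is closed in $X'$: but this is immediate, since for each fixed $x \in \ker T$ the evaluation $\varphi \mapsto \varphi(x)$ is continuous on $X'$, so $(\ker T)^\perp = \bigcap_{x \in \ker T} \{\varphi : \varphi(x)=0\}$ is an intersection of closed sets. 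Therefore $\im(T') = (\ker T)^\perp$ is closed in $X'$. The only place the countable-type hypothesis really enters is the first paragraph (surjectivity of $Y' \to Z'$), and one should remember to check that the isomorphism $Y' \xrightarrow{\sim} (\ker T)^\perp$ of the second paragraph is not merely algebraic but bicontinuous, which follows from the open mapping theorem applied to the continuous bijection $(X/\ker T)' \to Y'$ (or more simply, from the fact that a topological isomorphism $X/\ker T \simeq Y$ induces a topological isomorphism on duals).

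\medskip

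I expect the main obstacle to be purely expository: pinning down the correct form of Hahn--Banach over $\Cp$ for countable-type spaces and citing it cleanly (e.g.\ via Schneider's book on nonarchimedean functional analysis, where Banach spaces of countable type over a field that is not spherically complete still enjoy extension of functionals with control of norms). Everything else is soft: the open mapping theorem over $\Cp$ holds for Banach spaces without restriction, and the identification of $\im(T')$ with an annihilator is formal. One should be mildly careful that ``of countable type'' is inherited by closed subspaces and by quotients, both of which are standard. No hard estimate or delicate construction is needed here; the proposition is essentially the standard functional-analytic packaging, with the countable-type hypothesis inserted exactly to compensate for the failure of spherical completeness of $\Cp$.
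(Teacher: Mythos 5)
Your argument is correct, and it is the expected one: reduce to the surjective case via Hahn--Banach for countable-type spaces over $\Cp$ (so that $Y'\to Z'$ is onto, where $Z=\im(T)$), then use the open mapping theorem to identify $X/\ker T$ with $Y$ topologically, and finally observe that $\im(T')$ is the annihilator $(\ker T)^\perp$, a closed subspace of $X'$ since it is an intersection of kernels of the (continuous) evaluation maps. One small point: the ``bicontinuity'' worry you raise in your second paragraph is not actually needed --- to conclude that $\im(T')$ is closed you only need the set-theoretic identity $\im(T')=(\ker T)^\perp$, which follows from surjectivity of $T$ plus the open mapping theorem.

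The paper itself does not carry out this argument; it simply cites Theorem 3.1(i)--(ii) and the remarks on p.~202 of Henr\'{\i}quez--Navarro--Aguayo-Garrido \cite{HNA} (a nonarchimedean closed range theorem). So your proof is not ``the same as the paper's'' in any literal sense --- the paper delegates entirely to a reference --- but it is almost certainly the substance of what that reference proves. Your version has the advantage of being self-contained and making explicit exactly where the countable-type hypothesis is used (only in the Hahn--Banach step), which is the kind of transparency the one-line citation hides.
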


\begin{proof}
The result follows from theorem 3.1, (ii) and (i), of \cite{HNA}, given the remarks on page 202 of ibid.
\end{proof}

\begin{coro}
\label{injifcl}
The map $T' : \Lambda(\OO_F) \to \calE^+_{\Cp}$ is an isometry on its image.
\end{coro}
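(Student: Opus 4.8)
The goal is to prove that $T' : \Lambda(\OO_F) \to \calE^+_{\Cp}$ is an isometry on its image. The plan is to combine the previous two propositions with a general Banach-space principle. By prop \ref{peandual}, $T'$ is the dual of the surjective map $T$, and by prop \ref{imsprcl} its image is closed. The key point is a standard fact: if $T : X \to Y$ is a continuous surjection of $\Cp$-Banach spaces, then after rescaling the norm on $X$ (or directly, if the open mapping theorem gives a constant $1$) the induced map $X/\ker T \to Y$ is an isometry, and dually the transpose $T' : Y' \to X'$ becomes an isometry onto its (closed) image. So first I would invoke the $p$-adic open mapping theorem to say that $T$ is open; since all the spaces here are $\Cp$-Banach spaces with norms taking values in $|\Cp^\times| = p^{\QQ}$, and the relevant maps are norm-decreasing (the Mahler/$P_n$ coefficients live in $\OO_{\Cp}$), I expect the optimal constant to be exactly $1$, i.e. $T$ is a quotient map of norm $1$.

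Concretely, the argument runs as follows. One checks that $\|T(g)\|_{C^0(\OO_F)} \le \|g\|_{C^0(\Zp)}$ for all $g$: writing $g : x \mapsto \sum_n \lambda_n \binom{x}{n}$, we have $\|g\| = \sup_n |\lambda_n|$, and $T(g) : a \mapsto \sum_n \lambda_n P_n(a\Omega)$ with $P_n(a\Omega) \in \OO_{\Cp}$ for $a \in \OO_F$, so $|T(g)(a)| \le \sup_n |\lambda_n| = \|g\|$. Hence $\|T\| \le 1$, so dually $\|T'\| \le 1$, i.e. $T'$ is norm-decreasing. For the reverse inequality on $\im T'$, one uses that $T$ is surjective with closed image, together with the open mapping theorem, to conclude that for every $f \in C^0(\OO_F,\Cp)$ and every $\eps > 0$ there is $g \in C^0(\Zp,\Cp)$ with $T(g) = f$ and $\|g\| \le \|f\| + \eps$ (since the norms are $p^{\QQ}$-valued and $T$ is norm-$1$, one can even arrange $\|g\| = \|f\|$). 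Then for $\mu \in \Lambda(\OO_F)$,
\[
\|T'(\mu)\|_{\calE^+_{\Cp}} = \sup_{\|g\| \le 1} |\langle T'(\mu), g\rangle| = \sup_{\|g\|\le 1} |\langle \mu, T(g)\rangle| \ge \sup_{\|f\| \le 1} |\langle \mu, f\rangle| \cdot (1-\eps') = \|\mu\|_{\Lambda(\OO_F)} \cdot (1 - \eps'),
\]
where we used that every $f$ with $\|f\| \le 1$ is of the form $T(g)$ with $\|g\| \le 1 + \eps$. Letting $\eps' \to 0$ gives $\|T'(\mu)\| \ge \|\mu\|$, and combined with $\|T'\| \le 1$ this shows $\|T'(\mu)\| = \|\mu\|$, i.e. $T'$ is an isometry onto its image.

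I would phrase the proof so as to quote prop \ref{imsprcl} for closedness of the image (so that $\im T'$ is a genuine Banach subspace) and prop \ref{clrgpinv} / the open mapping theorem for the norm-$1$ surjectivity lifting. The only mild subtlety — and the step I expect to be the main obstacle — is justifying that the lift $g$ of $f$ can be taken with $\|g\| = \|f\|$ rather than merely $\|g\| \le C\|f\|$ for some $C \ge 1$: this is where one must use both that $T$ is norm-decreasing (so $C$ can be taken arbitrarily close to $1$) and that all the norms involved take values in the discrete value group $p^{\QQ}$ (so an infimum over lifts is attained, or at least the constant $1$ is achieved in the limit). Once that is in place, the duality computation above is routine. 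An alternative, perhaps cleaner, route: one can cite directly the statement that the transpose of a norm-$1$ quotient map of $\Cp$-Banach spaces is an isometric embedding, which is part of the general functional-analytic package (e.g. in \cite{HNA}), and then only needs to verify $\|T\| \le 1$ by the $\OO_{\Cp}$-integrality of the $P_n(a\Omega)$, plus surjectivity of $T$ from coro \ref{thb}. I would present this shorter version.

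\begin{proof}
By prop \ref{peandual}, $T'$ is the transpose of $T$. The map $T$ has norm $\le 1$: if $g : x \mapsto \sum_{n \ge 0} \lambda_n \binom{x}{n}$ with $\lambda_n \to 0$, then $\|g\|_{C^0(\Zp,\Cp)} = \sup_n |\lambda_n|$, and since $P_n(a\Omega) \in \OO_{\Cp}$ for all $a \in \OO_F$, we get $|T(g)(a)| = |\sum_n \lambda_n P_n(a\Omega)| \le \sup_n |\lambda_n|$ for every $a$, hence $\|T(g)\|_{C^0(\OO_F,\Cp)} \le \|g\|$. Consequently $\|T'(\mu)\|_{\calE^+_{\Cp}} \le \|\mu\|_{\Lambda(\OO_F)}$ for all $\mu \in \Lambda(\OO_F)$.

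For the reverse inequality, note that $T$ is surjective by coro \ref{thb}, and $\im(T') $ is closed in $\calE^+_{\Cp}$ by prop \ref{imsprcl}. Fix $\mu \in \Lambda(\OO_F)$ and $\eps > 0$. By the open mapping theorem applied to the norm-$\le 1$ surjection $T$, for every $f \in C^0(\OO_F,\Cp)$ there exists $g \in C^0(\Zp,\Cp)$ with $T(g)=f$ and $\|g\| \le (1+\eps)\|f\|$. Therefore
\[ \| T'(\mu) \|_{\calE^+_{\Cp}} = \sup_{g \neq 0} \frac{|\langle \mu, T(g)\rangle|}{\|g\|} \geq \sup_{f \neq 0} \frac{|\langle \mu, f\rangle|}{(1+\eps)\|f\|} = \frac{\|\mu\|_{\Lambda(\OO_F)}}{1+\eps}. \]
Letting $\eps \to 0$ gives $\|T'(\mu)\|_{\calE^+_{\Cp}} \geq \|\mu\|_{\Lambda(\OO_F)}$, and combined with the inequality of the previous paragraph we obtain $\|T'(\mu)\|_{\calE^+_{\Cp}} = \|\mu\|_{\Lambda(\OO_F)}$. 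Hence $T'$ is an isometry on its image.
\end{proof}
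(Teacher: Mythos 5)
Your overall strategy — reduce to showing that $T$ lifts with norm control close to $1$, then dualize — is genuinely different from the paper's. The paper instead exploits the fact that $T'$ is a ring homomorphism and that the Gauss norms on $\Lambda(\OO_F)$ and $\calE^+_{\Cp}$ are multiplicative: if $\|T'(f)\| = C\|f\|$ with $C<1$, then $\|T'(f^n)\| = C^n\|f^n\|$, and this contradicts the boundedness of $(T')^{-1}$ on $\im(T')$ (which comes from prop \ref{imsprcl} and the open mapping theorem). No lifting constant for $T$ is needed.

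Your version has a real gap at the step you yourself flag as the main obstacle. You claim that because $\|T\|\le 1$ and $T$ is surjective, the open mapping theorem yields, for every $\eps>0$, lifts $g$ with $T(g)=f$ and $\|g\|\le(1+\eps)\|f\|$. This is false: the open mapping theorem provides \emph{some} constant $C$ with $\|g\|\le C\|f\|$, and having $\|T\|\le 1$ does not force $C$ to be close to $1$. Concretely, take $X=c^0(\NN,\Cp)$, $Y=\Cp^2$, and $T((x_n))=(\sum_n x_n,\ \sum_n \beta_n x_n)$ with $\beta_n=1$ for $n$ even and $\beta_n=1+p$ for $n$ odd. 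Then $\|T\|=1$ and $T$ is surjective, but any lift of $(1,0)$ satisfies $\sum_{n\ \mathrm{odd}} x_n = -p^{-1}$, hence $\|x\|\ge p$; the optimal lifting constant is $p$, not $1$. So "norm-decreasing surjection" is strictly weaker than "strict quotient map", and the inequality $\|T'(\mu)\|\ge\|\mu\|/(1+\eps)$ does not follow. What you \emph{do} get from your argument is only that $T'$ is a topological isomorphism onto its image (with a constant coming from the open mapping theorem), which is weaker than the isometry claim. To close the gap along your route you would need an actual control on the lifting constant, e.g. by invoking the isometry part of the Fresnel--de Mathan theorem together with the explicit lift constructed in the proof of theorem \ref{mahlsurj}; or, more simply, use the paper's multiplicativity trick, which upgrades "topological isomorphism onto a closed image" to "isometry" for free because $T'$ is an algebra map between rings with multiplicative Gauss norms.

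Also a minor point: your $\|T\|\le 1$ estimate is correct but is not quite what the paper records; the paper states $\|T'(f)\|\le\|f\|$ directly, which follows because $T'$ sends $\OO_{\Cp}\dcroc{\OO_F}$ into $\OO_{\Cp}\dcroc{X}$ (via $\delta_b\mapsto 1+G([b](X))$). Either way this direction is fine; the problem is the reverse inequality.
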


\begin{proof}
The map $T'$ is injective, it is an algebra homomorphism, and $\| T'(f) \| \leq \| f \|$. If $T'$ is not an isometry, there is some $f \in \Lambda(\OO_F)$ such that $\| T'(f) \| = C \cdot \| f \|$ with $C < 1$. We then have $\| T'(f^n) \| \leq C^n \cdot \| f^n \|$. This contradicts the continuity of the map $(T')^{-1} : \im(T') \to \Lambda(\OO_F)$ provided by prop \ref{imsprcl} and the open mapping theorem.
\end{proof}

Note that $T'$ is not surjective if $F \neq \Qp$ as $T$ is not injective by prop \ref{noninj}. Indeed, $\ker(T) = {}^\perp \im(T')$ as the dual of a space of countable type separates its points.

\subsection{The character variety}
\label{cvsub}

Schneider and Teitelbaum have constructed in \cite{ST01} a $1$-dimensional rigid analytic group variety $\frX \subset \frB^d$ over $F$, called the character variety, whose closed points in an extension $K/F$ parameterize locally $F$-analytic characters $\OO_F \to K^\times$. They show that over $\Cp$, the variety $\frX$ becomes isomorphic to $\frB$. On the level of points, the isomorphism $\frB \to \frX$ is given by the map $z \mapsto \kappa_z$ recalled in \S \ref{subfou}.

The ring $\OO^b_{\Cp}(\frB^d)$ of bounded functions on $\frB^d$ defined over $\Cp$ is isomorphic to $\Lambda(\OO_F)$ and the ring $\OO^b_{\Cp}(\frX)$ of bounded functions on $\frX$ defined over $\Cp$ is isomorphic to $\calE^+_{\Cp}$. The restriction-to-$\frX$-map $\res_{\frX} : \OO^b_{\Cp}(\frB^d) \to \OO^b_{\Cp}(\frX)$ then corresponds to the inclusion $T' : \Lambda(\OO_F) \to \calE^+_{\Cp}$ considered in \S \ref{peano}. In particular, coro \ref{injifcl} implies the following result, which is theorem C.

\begin{theo}
\label{resisom}
The map $\res_{\frX} : \OO^b_{\Cp}(\frB^d) \to \OO^b_{\Cp}(\frX)$ is an isometry on its image.
\end{theo}

It is possible to characterize the image of $\res_{\frX}$, see prop 3.1.8 of \cite{AB24} for a proof of the following result. 

\begin{prop}
\label{imocpof}
The image of $\res_{\frX}$ is the set of power series $f(X) \in \calE^+_{\Cp}$ such that $\{ q^{-n} \cdot \psi^n( G([a](X)) \cdot f(X) ) \}_{a,n}$ is bounded in $\calE^+_{\Cp}$ as $a \in o_F$ and $n \geq 0$.
\end{prop}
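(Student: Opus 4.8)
The plan is to realise $\res_{\frX}=T'$ as being induced by an averaging operator on measures and then to recognise its image by reconstructing the measure from its image. For $b\in\OO_F$ write $e_b(X)=1+G([b](X))=T'(\delta_b)\in\OO_{\Cp}\dcroc{X}$; this is a unit of $\calE^+_{\Cp}$ of Gauss norm $1$, with $e_be_{b'}=e_{b+b'}$, $e_0=1$, $e_{\pi^n b}=\phi^n(e_b)$ and $G([b](X))=e_b-1$, using that $G\in\Hom_{\OO_{\Cp}}(\LT,\Gm)$ and that $T'$ is a ring homomorphism. The crucial first step is the identity $q^{-n}\psi^n(T'(\nu))=T'(R_n\nu)$ for $\nu\in\Lambda(\OO_F)$ and $n\geq0$, where $R_n$ is the (clearly norm-decreasing) map sending $\nu$ to the push-forward of $\nu|_{\pi^n\OO_F}$ under division by $\pi^n$, so that $R_n\nu(a+\pi^m\OO_F)=\nu(\pi^n a+\pi^{n+m}\OO_F)$. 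Because $\LT[\pi^\infty]$ is an analytic boundary (remark \ref{ltex}(1)), it suffices to verify this after evaluating at each $\omega\in\LT[\pi^\infty]$; there one writes $\psi^n(g)(\omega)=\sum_{[\pi^n]z=\omega}g(z)$ (an immediate extension of lemma \ref{psisum}) and $T'(\sigma)(z)=\int_{\OO_F}\kappa_z\,d\sigma$ (from prop \ref{peandual} and $\kappa_z(b)=\sum_n P_n(b\Omega)z^n$), and uses the elementary projection formula $\psi^n(\phi^n(g)h)=g\,\psi^n(h)$, the value $\psi^n(1)=q^n$, the multiplicativity $\kappa_{z+_{\LT}z'}=\kappa_z\kappa_{z'}$, and the orthogonality relation $\sum_{\omega'\in\LT[\pi^n]}\kappa_{\omega'}=q^n\,\mathbf{1}_{\pi^n\OO_F}$ on $\OO_F/\pi^n$, comparing with the Riemann-sum expansion of the right-hand side.

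Granting this, the inclusion ``$\subseteq$'' is immediate: for $f=T'(\mu)$ one has $G([a](X))f=T'((\delta_a-1)*\mu)$, so $q^{-n}\psi^n(G([a](X))f)=T'(R_n((\delta_a-1)*\mu))$ has $\calE^+_{\Cp}$-norm at most $\|(\delta_a-1)*\mu\|\leq\|\mu\|$, uniformly in $a,n$. For ``$\supseteq$'', suppose $C:=\sup_{a,n}\|q^{-n}\psi^n(G([a](X))f)\|_D<\infty$. I would reconstruct a candidate by setting $\mu_f(a+\pi^n\OO_F):=q^{-n}\psi^n(e_{-a}f)(0)$ and checking this is a well-defined, finitely additive set function on the compact opens of $\OO_F$; both facts reduce, via the projection formula, to $e_{-c}(0)=1$ and $\sum_{c\bmod\pi}e_{-c}(z)=q\,\mathbf{1}_{z=0}$ for $z\in\LT[\pi]$ (orthogonality of characters of $\OO_F/\pi$), together with $\psi(g)(0)=\sum_{z\in\LT[\pi]}g(z)$. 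The decisive point is that $\mu_f$ is bounded: from $\mu_f(a+\pi^n\OO_F)=q^{-n}\psi^n(f)(0)+q^{-n}\psi^n(G([-a](X))f)(0)$ the second summand is $\leq C$, while for the first one specialises the hypothesis to $a=\pi^n a'$ — using $G([\pi^n a'](X))=\phi^n(G([a'](X)))$ and the projection formula one gets $q^{-n}\psi^n(G([\pi^n a'](X))f)=G([a'](X))\cdot q^{-n}\psi^n(f)$, so multiplicativity of the Gauss norm and $\|G(X)\|_D\geq|\Omega|>0$ force $\|q^{-n}\psi^n(f)\|_D\leq C/|\Omega|$. Hence $\mu_f$ is bounded and extends (standard $p$-adic measure theory) to a measure $\mu\in\Lambda(\OO_F)$.

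It then remains to identify $T'(\mu)$ with $f$. The map $g\mapsto(q^{-n}\psi^n(e_{-a}g)(0))_{a,n}$ is injective on $\calE^+_{\Cp}$: if all these vanish then $\sum_{z\in\LT[\pi^n]}\kappa_z(-a)g(z)=0$ for all $a,n$, so Fourier inversion on $\OO_F/\pi^n$ forces $g$ to vanish on $\LT[\pi^\infty]$, whence $g=0$ since that set is an analytic boundary; and the datum attached to $T'(\mu)$ is $\mu$ itself, because $q^{-n}\psi^n(e_{-a}T'(\mu))(0)=T'(R_n(\delta_{-a}*\mu))(0)=(\delta_{-a}*\mu)(\pi^n\OO_F)=\mu(a+\pi^n\OO_F)$ by the first step and $T'(\sigma)(0)=\sigma(\OO_F)$, which matches $\mu_f$ by construction. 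I expect the main obstacle to be exactly the boundedness of $\mu_f$ in ``$\supseteq$'': one naively controls only the differences $\mu_f(a+\pi^n\OO_F)-\mu_f(\pi^n\OO_F)$, and the level masses $q^{-n}\psi^n(f)(0)$ threaten to grow like $|q|^{-n}$ through the additivity recursion; the substitution $a\in\pi^n\OO_F$, which is exactly where the full $\calE^+_{\Cp}$-norm of $q^{-n}\psi^n(G([a](X))f)$ — rather than just its value at $0$ — and the multiplicativity of the Gauss norm are needed, is what rescues the argument.
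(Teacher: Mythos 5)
Your proof is correct, and it is self-contained, which is more than the paper offers: the paper does not prove this proposition at all but cites Prop.\ 3.1.8 of \cite{AB24}, so a line-by-line comparison is not possible. What you do is identify the right dictionary between the two sides of $T'$: the operator $R_n$ on $\Lambda(\OO_F)$ (restrict to $\pi^n\OO_F$, rescale by $\pi^{-n}$) corresponds to $q^{-n}\psi^n$ on $\calE^+_{\Cp}$, which you verify by evaluating on $\LT[\pi^\infty]$ (using that it is an analytic boundary, remark~\ref{ltex}(1)) together with orthogonality of the characters $\kappa_\omega$ on $\OO_F/\pi^n$; the projection formula $\psi^n(\phi^n(h)g)=h\,\psi^n(g)$ and $\psi(1)=q$ are indeed the correct normalizations in this paper, where $\psi$ is not divided by $q$ or $p_1$. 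With the dictionary in hand, the inclusion of the image in the displayed set is immediate from the isometry of $T'$ (coro.~\ref{injifcl}), and you correctly locate the only delicate point in the converse: the naive reconstruction $\mu_f(a+\pi^n\OO_F)=q^{-n}\psi^n(e_{-a}f)(0)$ is finitely additive and well defined, but its level masses $q^{-n}\psi^n(f)(0)$ are not controlled by a pointwise bound alone. Your fix — take $a\in\pi^n\OO_F\setminus\{0\}$, pull $\phi^n$ through $\psi^n$ to get $G([a'](X))\cdot q^{-n}\psi^n(f)$, and use multiplicativity of the Gauss norm together with $\|G\|_D\geq|\Omega|>0$ to bound the full Gauss norm of $q^{-n}\psi^n(f)$ by $C/\|G\|_D$ — is exactly what makes the hypothesis (boundedness in $\calE^+_{\Cp}$, not merely at $0$) strong enough. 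The final step, identifying $T'(\mu)$ with $f$ by injectivity of the tuple of finite-level Fourier coefficients (Fourier inversion at each level plus the boundary property), also goes through. The argument is complete.
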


We finish by stating and proving theorem D (the only result of this section on $p$-adic Fourier theory that uses theorem A beyond the cyclotomic case).

\begin{theo}
\label{suptors}
If $f \in \OO^b_{\Cp}(\frX)$, then $\| f \|_{\frX} = \sup_{\kappa \in X_{\tor}} |f(\kappa)|$. 
\end{theo}

\begin{proof}
In the isomorphism between $\frX$ and $\frB$, the set $X_{\tor}$ of torsion characters $(\OO_F,+) \to (\Cp^\times,\times)$ corresponds to $\LT[\pi^\infty]$, and $\OO^b_{\Cp}(\frX)$ is isomorphic to $\calE^+_{\Cp}$. 

Theorem A applied to $P(X)=[\pi](X)$ then implies the result.
\end{proof}

\providecommand{\og}{``}
\providecommand{\fg}{''}

%\bibliographystyle{smfalpha}
%\bibliography{BDHOL}
\end{document}